\definecolor{mygreen}{rgb}{0.1,0.75,0.2}
 \newtheorem{thm}{Theorem}[section]
 \newtheorem{lem}[thm]{Lemma}
 \newtheorem{prop}[thm]{Proposition}
 \theoremstyle{definition}
 \newtheorem{defn}{Definition}
 \theoremstyle{remark}
 \numberwithin{equation}{section}
\newcommand{\la}{\langle}
\newcommand{\ra}{\rangle}
\newcommand{\pd}{\partial}
\newcommand{\eps}{\varepsilon}
\newcommand{\ud}{\,\mathrm{d}}
\newcommand{\I}{\mathbb{T}}
\newcommand{\8}{\infty}
\newcommand{\lnx}{(\ln h_x)_x}
\newcommand{\lnxn}{(\ln h_{nx})_x}
\newcommand{\rhu}{{\overset{*}{\rightharpoonup}}}
\begin{document}

\title[Global BV solution]{Global Strong Solution With BV Derivatives to Singular Solid-on-Solid model With Exponential Nonlinearity}

\author{Yuan Gao}
\address{Department of Mathematics, Duke University,
  Durham NC 27708, USA}
\email{yuangao@math.duke.edu}

\date{\today}

\begin{abstract}
In this work, we consider the one dimensional very singular fourth-order  equation for solid-on-solid model in attachment-detachment-limit regime with exponential nonlinearity
$$h_t = \nabla \cdot (\frac{1}{|\nabla h|} \nabla e^{\frac{\delta E}{\delta h}}) =\nabla \cdot (\frac{1}{|\nabla h|}\nabla e^{- \nabla \cdot (\frac{\nabla h}{|\nabla h|})})$$
where total energy $E=\int |\nabla h|$ is the total variation of $h$.
Using a logarithmic correction $E=\int |\nabla h|\ln|\nabla h| \ud x$ and gradient flow structure with a suitable defined functional,
we prove the evolution variational inequality solution preserves a positive gradient $h_x$ which has upper and lower bounds but in BV space. We also obtain the global strong solution to the solid-on-solid model
which allows an asymmetric singularity $h_{xx}^+$ happens.
\end{abstract}
\keywords{Gradient flow, Characterization of sub-differential, Radon measure, Latent singularity}

\maketitle

\section{Introduction}
\subsection{Background}
Epitaxial growth on crystal surface is an important nanoscale phenomena which has attracted  lots of attention due to its application in  industry and in manufacture of some typical experimental materials. We refer to \cite{SSR2, PimpinelliVillain:98} for more physical description.

In this paper, we focus on dynamic process for solid on solid (SOS) model on crystal surface, where adatoms detach from above, diffuse on the substrate and then are absorbed at another position. There are some researches on the SOS model from microscopic viewpoint and derivation of continuum limit from mesoscopic level; see \cite{Yip2001, our, Kohnbook, Margetis2006, Tang1997}.
The kinetic process can also be described using macroscopic variable, height profile $h(x,t)$ of a solid film.  Here we directly write down the evolution equation for surface height $h(x,t)$ using conservation law of mass
$$h_t+\nabla\cdot J=0,$$
where $$J=-M(\nabla h)\nabla\rho_s$$ is the adatom flux by Fick's law \cite{Margetis2006}, the mobility function $M(\nabla h)$ is a functional of $\nabla h$  and $ \rho_s$ is the local equilibrium
density of adatoms.
By the Gibbs-Thomson relation \cite{cooper1996, widom1982, Margetis2006}, which is connected to the theory of molecular capillarity, the corresponding local equilibrium
density of adatoms is given by
$$\rho_s=\rho^0 e^{\frac{\mu}{kT}},$$
where $\rho^0$
is a constant
reference density, $T$ is the temperature and $k$ is the Bolzmann constant.

Now we consider the expression of the chemical potential $\mu$, the rate of change in the surface energy per atom. For a physical constant $L$, we impose periodic boundary condition for simplicity, i.e.
\begin{equation}\label{Periodic}
  h(x+L)=h(x)+1 \quad \text{ for a.e. }x\in\mathbb{R}.
\end{equation}
 Denote the domain for one period as $\I:=[0,L)$.
The general total energy for epitaxial growth is
\begin{equation}
  E= \frac{1}{p}\int_\I |\nabla h|^p \ud x
\end{equation}
for some $p\geq 1$  and the corresponding chemical potential is
\begin{equation}
  \mu=\frac{\delta E}{\delta h} = - \nabla \cdot (|\nabla h|^{p-2} \nabla h).
\end{equation}
Hence the general evolution equation becomes
\begin{equation}\label{exp001}
h_t = \nabla \cdot \Big(M(\nabla h) \nabla e^{\frac{\mu}{KT}}\Big)=\nabla \cdot \Big(M(\nabla h) \nabla e^{- \nabla \cdot (|\nabla h|^{p-2} \nabla h)}\Big),
\end{equation}
where the mobility $M(\nabla h)$ is a constant in diffusion-limit (DL) regime, while the mobility $M(\nabla h)=\frac{1}{|\nabla h|}$ in the attachment-detachment-limit (ADL) regime; see \cite{our, our2, Kohnbook, D24, Margetis2006, Zang1990} and the references in there.
\\
{\bf Difficulties and references in the continuum framework.} In the previous researches, the exponential form of chemical potential $e^{\mu/kT}$ is regarded as linear in chemical potential $e^{\mu/kT}\approx1+\mu/kT$ under the hypothesis $|\mu|\ll kT.$ When $p>1$, we refer to \cite{She2011, Leoni2015, our2, ourxu, LX} for analytical results including existence, uniqueness and long time behaviors in DL regime and ADL regime. General speaking, the ADL model is harder than DL model due to the singular mobility $\frac{1}{|\nabla h|}$ so the global monotone solution is understood in almost everywhere sense in  \cite{our2, ourxu}.
For the case $p=1$, the total energy and chemical potential become  the total variation of $h$ (see physical derivation from mesoscopic viewpoint by bond counting in \cite{Liu-Bob}), i.e.
\begin{equation}
  E=\int_\I |\nabla h| , \quad  \mu=\frac{\delta E}{\delta h} = - \nabla \cdot (\frac{\nabla h}{|\nabla h|}).
\end{equation}
After linearization, this kind of fourth-order singular equation in DL regime is regarded as $H^{-1}$ gradient flow for the BV seminorm $\int |\nabla h|$. The discontinuous solution is studied in \cite{Giga-Giga2010} and the flattening effect in finite time is proved in \cite{giga2010}; see also \cite{Giga-Kuroda, Giga-Monika} for further development in $H^{-s}$ space and other boundary conditions. However, the method therein works only for DL regime whose mobility is a constant and the evolution in ADL regime is still an open question for $p=1$. More recently, the original exponential equation \eqref{exp001} in DL regime is studied in \cite{G2, Liu-Bob, M-n} for $p=2$ and in \cite{Xu-n} for $p\in(1,2]$, where the existence of strong solution with latent singularity and global solution starting from small data are established. For $p=1$ in DL regime, \cite{LLDM} constructs some explicit solution to demonstrate the asymmetry of height profile due to exponential effect.
No matter with or without linearization, those method in DL regime for $p=1$ more or less relies on the total variation flow structure of the PDE so it fails to work for ADL regime.  To our best knowledge, there is no result for the evolution equation in ADL regime with $p=1$
\begin{equation}\label{surface}
  h_t = \nabla \cdot (M(h) \nabla e^{\frac{\mu}{KT}}) =\nabla \cdot (\frac{1}{|\nabla h|}\nabla e^{- \nabla \cdot (\frac{\nabla h}{|\nabla h|})}),
\end{equation}
which is a very singular fourth order equation with exponential nonlinearity.
\\
{\bf Logarithmic correction and explanation from mesoscopic view.}
From the mesoscopic view  we can regard the surface evolution equation as  continuum limit of discrete Burton-Cabrera-Frank (BCF) model \cite{BCF, our, our2}, which tracks the dynamics of positions of each step $x_i$ with height $h_i= h_0+\frac{i}{N}$. In ADL regime, it can be expressed by
\begin{equation}
\frac{\ud x_i}{\ud t}=N \bigl[(f_{i+1}-f_{i})-(f_{i}-f_{i-1})\bigr],\quad i=1,\cdots,N,
\end{equation}
 with
only repulsive interaction between the nearest step
$$f_i:=-\biggl(\frac{1}{x_{i+1}-x_i}-\frac{1}{x_{i}-x_{i-1}}\biggr)=\frac{\partial E_i}{\partial x_i},$$
which is actually the dominated elastic interaction \cite{Xiang2002} in BCF step model depending on the distance between steps.
Then the corresponding discrete energy is
$$E_i=\frac{1}{2}\sum_{i=1}^N\sum_{|j-i|=1} \ln \lvert x_i-x_j\rvert, $$
where $N$ goes to $+\8$ in the continuum limit.
The corresponding continuum interaction function $f$ in the limit PDE is
$f=-\nabla(\ln|\nabla h|)$ ( see detailed consistent check in \cite{our}), which inspires us that we shall use a logarithmic factor to adjust the total energy for the case of $p=1.$
Therefore, we take the  total energy with logarithmic correction as
\begin{equation}
   E(h):= \int |\nabla h|\ln|\nabla h| \ud x, \quad  \mu:=\frac{\delta E(h)}{\delta h} = - \nabla \cdot \Big( \frac{\nabla h}{|\nabla h|} (\ln |\nabla h|+1) \Big).
\end{equation}
This kind of logarithmic correction is also used for the linearized surface evolution equation in \cite{Gao-Ji} since the logarithmic correction is negligible for small surface gradients.
The surface height equation turns out to be
\begin{equation}\label{surface}
  h_t = \nabla \cdot (M(h) \nabla e^{\frac{\mu}{KT}}) =\nabla \cdot\Big(\frac{1}{|\nabla h|}\nabla e^{- \nabla \cdot \big(\frac{\nabla h}{|\nabla h|}(\ln |\nabla h|+1)\big)}\Big).
\end{equation}
\\
{\bf{Results and methods.}}
In this paper, we start with the simplest situation: one dimensional case with monotone initial data, i.e. $\partial_x h_0>0.$
If we can prove $h_x>0$ for all the time, then we obtain a mathematical validation  for
surface height equation \eqref{surface}, i.e.
\begin{equation}\label{maineq}
    h_t = \nabla \cdot (M(h) \nabla e^{\mu}) =\left(\frac{1}{h_x}( e^{- (\ln h_x)_x})_x\right)_x
\end{equation}
with $\mu=\frac{\delta E(h)}{\delta h}=-(\ln h_x)_x.$ Specifically, we investigate the existence and uniqueness of the evolution variational inequality (EVI) solution and monotone strong solution to \eqref{surface} with a monotone initial data; see Theorem \ref{cor10} and Theorem \ref{mainth1} separately. We first observe the $L^2$ gradient flow structure by defining a proper, lower semi-continuous convex functional $\phi$. However due to the asymmetric effect brought by exponential nonlinearity, we shall allow a latent singularity for $h_{xx}$ and define the convex functional only on the absolutely continuous part of $h_{xx}$; see rigorous definition in \eqref{phi}. Then thanks to the detailed properties for the convex functional $\phi$ and the bound for $h_x$ provided by one dimensional BV space, we can apply the gradient flow method in metric space \cite{AGS} to obtain the EVI solution whose gradient is in BV space and has upper/lower bound. To further explore the strong solution with latent singularity to \eqref{maineq} in the sense that the equation holds almost everywhere, we carefully  characterize the sub-differential of $\phi$ by first carry on the calculations in some dense set then prove the sub-differential $\partial\phi$ is single-valued.
We call it latent singularity because the singularity in solution does not effect the evolution of the solution but it is not removable. In the end, the singular PDE is understood in the sense that the equation holds almost everywhere after removing  the singular part of $(\ln h_x)_x$. That is to say, the PDE is understood as a limit of a regularized problem.

\subsection{Gradient flow in $L^2(\I)$}\label{sec1.2}
Let us define a new functional with some formal observations and recast \eqref{maineq} into a $L^2(\I)$ gradient flow. Let $\phi$ be
\begin{equation}
  \phi(h):= \int_\I e^{-(\ln h_x)_x} \ud x.
\end{equation}
The variation of $\phi$ is
$$\frac{\delta \phi}{\delta h}=-\Big(\frac{1}{h_x} \big( e^{-(\ln h_x)_x} \big)_x \Big)_x$$
and then formally we have
\begin{equation}\label{gfs}
  h_t=-\frac{\delta\phi}{\delta h}.
\end{equation}

To study the monotone strong solution to \eqref{maineq}, we plan to apply the gradient flow theory in metric space, $L^2(\I)$.
It requires we  clarify the working space associated with proper topology. We will define $\phi(h)$ rigorously later in \eqref{phi}. Let us first see some inspiring observations.
\\
{\bf Observation 1.} Thanks to the periodic assumption, we have
\begin{equation}
  \frac{\ud}{\ud t} \int_\I h \ud x = 0,
\end{equation}
which implies $\int_\I h \ud x =\int_\I h_0 \ud x.$ Moreover from
$$\int_\I h_{xx} \ud x =0$$
we know
\begin{equation}
  \int_\I (h_{xx})^+ \ud x = \int_\I (h_{xx})^- \ud x.
\end{equation}
Here $(h_{xx})^- $ is the negative part of $h_{xx}$ and $(h_{xx})^+ $ is the positive part of $h_{xx}$. In fact, the notation of integration is just formal for now and we will see $(h_{xx})^+$ could be Radon measure later.
\\
{\bf Observation 2.}
From the gradient flow structure \eqref{gfs},
\begin{equation}
  \frac{\ud \phi}{\ud t} =\int_\I \frac{\delta\phi}{\delta h} h_t \ud x = -\int_\I \big|\frac{\delta\phi}{\delta h}\big|^2 \ud x =-\int_\I h_t^2 \ud x \leq 0,
\end{equation}
which gives the observation
$$\phi(u(t))\leq \phi(u(0)) \quad \text{ for any }t\geq 0.$$
Therefore we obtain uniform estimate
\begin{align*}
  \int_\I (\lnx)^- \ud x = & \int_{\I \cap (\lnx)^->0 } e^{(\lnx)^-} \ud x \leq   \int_{\I  } e^{(\lnx)^- - (\lnx)^+} \ud x\\
  = &\int_{\I  } e^{-(\ln h_x)_x} \ud x = \phi(h(t)) \leq \phi(h(0)),
\end{align*}
where $(\lnx)^-$ denotes the negative part of $\lnx$ and $(\lnx)^+ $ is the positive part of $\lnx$. Thanks to the periodic boundary condition, we have
\begin{equation}\label{1.16}
  \frac{1}{2} \|\lnx\|_{L^1(\I)} =\|(\lnx)^-\|_{L^1(\I)} = \|(\lnx)^+\|_{L^1(\I)} \leq \phi(h(0)).
\end{equation}
However, since $L^1$ is non-reflexive Banach space, the uniform bound of $L^1$ norm dose not prevent $\lnx$ being a Radon measure.
This gives us the idea to carry on all the calculations in BV space, i.e. $\ln h_x \in BV(\I);$ see explicit definition in Section 2.
\\
{\bf Outlines. } The rest of this paper is organized as follows. We will define functional $\phi$ and establish the gradient flow structure rigorously in Section 2.1, 2.2. Then after exploring some properties of $\phi$ in Section 2.3, we will prove the existence if EVI solution in Section 2.4. Section 3 is devoted to obtain the strong solution with latent singularity to \eqref{maineq}.
\section{Variational inequality solution}
\subsection{Preliminaries}
We first introduce the spaces we will work in. Notice the invariant property of \eqref{maineq} if we add a constant $c$ to solution $h$ and \eqref{Periodic}. Therefore without loss of generality, we consider $h$ with mean value zero.
Let
\begin{equation}\label{Hnote}
H:=\left\{u\in L^2(\I):\int_\I u\ud x=0\right\},
\end{equation}
endowed with the standard scalar product $\la u,v\ra _H:=\int_\I uv \ud x$. Here $u\in L^2(\mathbb{T})$ means $u$ satisfies the periodic boundary condition \eqref{Periodic}.

As in the observation 2, since $L^1$ is not reflexive Banach space and has no weak compactness, we work in a larger space, BV space. Denote $\mathcal{M}$ as the space of finite signed Radon measures and $\|\cdot\|_{\mathcal{M}(\I)}$ is the {total variation} of the measure. Define Banach space
\begin{equation}
  V:= \{u\in H; u_x \in BV(\I)\}.
\end{equation}
Endow $V$ with the norm
$$\|u\|_{V}:= \|u\|_{L^2(\I)}+ \|u_{xx}\|_{\mathcal{M}(\I)},$$
which is equivalent to the norm $\|u_{xx}\|_{\mathcal{M}(\I)}$ due to poinc\'are's inequality for mean value zero function.

Next, from observation 2 we expect $\ln h_x \in BV(\I)\hookrightarrow L^\infty(\I)$, which implies there will be a lower/upper bound for $h_x$. Therefore we expect there are constants $c_1, c_2$ such that $c_1\leq h_x \leq c_2$. Then the uniform estimate
$$\|\lnx\|_{\mathcal{M}(\I)}=\big\|\frac{h_{xx}}{h_x}\big\|_{\mathcal{M}(\I)}\leq \phi(h(0))$$
  will lead to a uniform bound for $\|h_{xx}\|_{\mathcal{M}(\I)}$. Since $h_{xx}$ can be a Radon measure, we need to make those formal observations  rigorous in Section \ref{sec1.2} by first defining $\phi$ properly.
Notice for any  $\mu\in \mathcal{M} ,$ from \cite[p.42]{evans1992}, we have the decomposition
\begin{equation}\label{decom}
  \mu=\mu_{\|} +\mu_{\bot}
\end{equation}
with respect to the Lebesgue measure,
where $\mu_{\|} \in L^1(\I)$ is the absolutely continuous part of $\mu$ and $\mu_{\bot}$ is the singular part, i.e., the support of $\mu_{\bot}$ has Lebesgue measure zero.
Define the beam type functional
\begin{equation}\label{phi}
\phi:H\to[0,+\8],\qquad \phi(h):=
\begin{cases}
\int_\I e^{-(\lnx)^+_{\|}+(\lnx)^- }\ud x , & \text{if } h\in V,\, \text{ and } (\lnx)^-\ll\mathcal{L}^1,\\
+\8 &\text{otherwise},
\end{cases}
\end{equation}
 Here $(\lnx)_{\|}$ denotes the absolutely continuous part of $\lnx$,  $(\lnx)^-$ is the negative part of $\lnx$ and $(\lnx)^+$ is the positive part of $\lnx$ such that $(\lnx)^{\pm} $ are two non-negative measures such that $\lnx= (\lnx)^+ -(\lnx)^-.$ We call the singular part  $(\lnx)^+_{\bot}$ latent singularity in solution $h$.

\medskip

In view of the a priori estimate on the mass of the measure $h_{xx}$,
 we introduce the indicator function
\begin{equation}\label{psi}
\psi:H\to\{0,+\8\},\qquad \psi(h):=
\begin{cases}
0 & \text{if } h\in V ,\ \|h_{xx}\|_{\mathcal{M}(\I)}\le C_*,\\
+\8 & \text{otherwise}.
\end{cases}
\end{equation}
Here $C_*$ is a fixed constant, which is determined in \eqref{consC} by the initial datum later.

\subsection{Euler Scheme}
Even if \eqref{maineq} has a nice variational structure, and $V$ has Banach space structure. To avoid the technical difficulties brought  by  non-reflexivity we adopt the result \cite[Theorem~4.0.4]{AGS}
by Ambrosio, Gigli and Savar\'e. After defining the energy functional rigorously, the key processes are to study the detail properties of energy functionals.
First let us we establish the gradient flow evolution in the {\em metric}
space $(H,\mbox{dist})$, with distance $\mbox{dist}(u,v):=\|u-v\|_H$.
Let $h^0\in {H}$ be a given initial datum and $0<\tau\ll1$ be a given parameter.
We consider a sequence $\{x_n^{\tau}\}$ which satisfies the following unconditional-stable backward Euler scheme
\begin{equation}\label{E}
\left\{
\begin{array}{l}
x^{(\tau)}_n\in \text{argmin}_{x'\in H} \left\{(\phi+\psi)(x')+\dfrac1{2\tau} \|x'-x^{(\tau)}_{n-1}\|^2_{H}
\right\}  \qquad n\ge 1,\\
x^{(\tau)}_0:= h^0\in H.
\end{array}
\right.
\end{equation}
The existence and uniqueness of the sequence $\{x_n^{\tau}\}$ can be proved by direct method in calculus of variation after we establishing the convexity and lower semi continuity of $\phi+\psi$ in Lemma \ref{newlsc}; see also \cite[Prop 11]{G2}.
Thus we are considering the gradient descent with respect to $\phi+\psi$ in the space $(H,\mbox{dist})$.

Now for any $0<\tau\ll1$ we define the resolvent operator (see \cite[p. 40]{AGS})
\begin{equation*}
\mathcal{J}_\tau[h]:=\text{argmin}_{v\in H} \left\{(\phi+\psi)(v)+\dfrac1{2\tau} \|v-h\|^2_{H}\right\},
\end{equation*}
then the variational approximation of $h$ at $t$ is obtained by Euler scheme \eqref{E} as
\begin{equation}\label{Euler10}
h_n(t):=(\mathcal{J}_{t/n})^n[h^0].
\end{equation}
In Proposition \ref{EVI}, we will use the theory for gradient flow in metric space \cite[Theorem~4.0.4]{AGS} to establish the convergence of the variational approximation $h_n(t)$ to variational inequality solution to \eqref{maineq}, which is defined below.
\begin{defn}\label{defweak}
Given initial data $h^0\in H$, we call $h:[0,+\8)\to H$ a variational inequality solution to \eqref{maineq} if $h(t)$ is a locally absolutely continuous curve such that $\lim_{t\to 0} h(t)=h^0$ in $H$ and
 \begin{equation}
\la h_t(t),h(t)-v\ra_{{ H',H}}\le \phi(v)-\phi(h(t)) \quad \text{for a.e. } t>0,\,\forall v\in { D(\phi+\psi)}.
\end{equation}
\end{defn}
Next we study some properties like convexity and lower semi continuity in $H$, of the functional $\phi+\psi$.

\subsection{Convexity and lower semi continuity of function $\phi+\psi$ in $H$}
Before we prove the convexity and lower semi continuity of function $\phi+\psi$, we first state an important lemma concerning the weak lower semi continuity of $\phi$ in BV space.
\begin{prop}\label{newlsc}
  Let $h_n,\, h\in V$. If $(\ln h_{nx})_x {\rhu} \lnx$ in $\mathcal{M}(\I)$, we have
  \begin{equation}\label{lsc01}
  \liminf_{n\to +\8}\phi(h_n)\geq \phi(h).
  \end{equation}
\end{prop}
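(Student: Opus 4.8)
The plan is to recognize $\phi$ as a standard convex functional of the measure $\mu_n:=\lnxn$ and then extract weak-$*$ lower semicontinuity from a duality representation. First I would rewrite $\phi$ on its domain. On $\{(\lnx)^-\ll\mathcal{L}^1\}$ the negative part carries no singular mass, so the density of the absolutely continuous part of $\lnx$ is $(\lnx)_{\|}=(\lnx)^+_{\|}-(\lnx)^-$, and hence
\[
\phi(h)=\int_\I e^{-(\lnx)_{\|}}\ud x,
\]
with the convention $\phi(h)=+\8$ precisely when $(\lnx)^-$ has a nonzero singular part. Thus $\phi(h)=\mathcal{F}(\lnx)$, where $\mathcal{F}$ is the canonical functional associated with the convex integrand $f(s)=e^{-s}$. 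A short computation gives its recession function $f^{\8}(p)=0$ for $p\ge 0$ and $f^{\8}(p)=+\8$ for $p<0$, which reproduces both features of the definition \eqref{phi}: the positive singular part $(\lnx)^+_{\bot}$ is discarded, while any negative singular part costs $+\8$.

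Next I would establish the dual representation. The Legendre transform is $f^*(a)=a-a\ln(-a)$ for $a<0$, $f^*(0)=0$, and $f^*(a)=+\8$ for $a>0$, so the effective test class is $a\le 0$. I claim
\[
\phi(h)=\sup\Big\{\int_\I a\,\ud(\lnx)-\int_\I f^*(a)\,\ud x:\ a\in C(\I)\text{ periodic},\ a\le 0\Big\}.
\]
The inequality that $\phi(h)$ dominates the supremum is the easy half: for admissible $a$, the Fenchel--Young inequality gives $a\,(\lnx)_{\|}-f^*(a)\le e^{-(\lnx)_{\|}}$ pointwise, while $\int_\I a\,\ud(\lnx)_{\bot}\le 0$ because $a\le 0$ and, in the finite case, the singular part $(\lnx)_{\bot}=(\lnx)^+_{\bot}$ is nonnegative; adding these yields $\int_\I a\,\ud(\lnx)-\int_\I f^*(a)\,\ud x\le\phi(h)$, trivially also when $\phi(h)=+\8$.

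Finally, granting the representation, the conclusion is immediate. For each fixed admissible $a$ the map $\nu\mapsto\int_\I a\,\ud\nu-\int_\I f^*(a)\,\ud x$ is weak-$*$ continuous on $\mathcal{M}(\I)$: the first term converges along $\lnxn\rhu\lnx$ by definition, and the second is constant. Hence $\phi$ is a supremum of weak-$*$ continuous functionals, so for each $a$ we get $\liminf_n\phi(h_n)\ge\int_\I a\,\ud(\lnx)-\int_\I f^*(a)\,\ud x$, and taking the supremum over $a$ gives \eqref{lsc01}.

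The main obstacle is the nontrivial half of the duality representation, namely that the supremum actually recovers $\phi$ (equivalently, the verification that $\mathcal{F}$ coincides with $\phi$, which would instead let one quote the Reshetnyak/Goffman--Serrin lower semicontinuity theorem for convex functionals of measures). The pointwise optimizer is $a^{*}(x)=-e^{-(\lnx)_{\|}(x)}$, which need be neither continuous nor bounded; one must approximate it by continuous periodic $a_k\le 0$ while simultaneously keeping $a_k$ close to $0$ on the Lebesgue-null support of $(\lnx)_{\bot}$ so that $\int_\I a_k\,\ud(\lnx)_{\bot}\to 0$, and, in the case $(\lnx)^-\not\ll\mathcal{L}^1$, concentrating $a_k$ near the negative singular support so as to drive the supremum to $+\8$. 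This mollification-with-singular-control step, together with the recession computation, is the only delicate point; everything else is soft.
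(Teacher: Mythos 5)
Your argument is correct in outline but takes a genuinely different route from the paper. The paper's proof is essentially a citation: it isolates the two dangerous scenarios --- the absolutely continuous part of $\mu_n:=\lnxn$ converging into the singular part of $\mu:=\lnx$, and the singular part of $\mu_n$ converging into the absolutely continuous part of $\mu$ --- and refers to \cite[Proposition 5]{G2} for the case analysis. You instead recognize $\phi$ as the Goffman--Serrin convex functional of the measure $\lnx$ with integrand $f(s)=e^{-s}$; your recession computation ($f^{\8}\equiv 0$ on $[0,+\8)$, $f^{\8}\equiv+\8$ on $(-\8,0)$) correctly reproduces both conventions in \eqref{phi} (positive singular mass is free, negative singular mass is forbidden), your conjugate $f^*(a)=a-a\ln(-a)$ for $a<0$ is right, and once the dual representation is granted, lower semicontinuity is immediate because $\phi$ becomes a supremum of weak-$*$ continuous affine functionals of the measure. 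This is more conceptual than the paper's route: it explains \emph{why} the definition of $\phi$ discards $(\lnx)^+_\bot$ yet penalizes $(\lnx)^-_\bot$ infinitely, and it reduces the proposition to a single classical theorem. The price is the hard half of the duality, which you only sketch: approximating the unbounded optimizer $-e^{-(\lnx)_\|}$ by continuous nonpositive test functions that simultaneously charge almost none of the (possibly densely supported) nonnegative singular part, and the blow-up construction when $(\lnx)^-$ has singular mass. As written that is a gap, but it is exactly the content of the Goffman--Serrin/Demengel--Temam/Reshetnyak theorem you name, so quoting that theorem closes the argument and puts your proof on the same footing as the paper's, which likewise rests on an external citation. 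One point worth making explicit: the easy (Fenchel--Young) half must be applied to each $h_n$, giving $\phi(h_n)\ge\int_\I a\,\ud\mu_n-\int_\I f^*(a)\,\ud x$, while the hard half is needed only for the limit $h$; you have this right, but it is the reason the unproved half cannot simply be dropped.
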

\begin{proof}
 Denote $\mu_{n}:=(\ln h_{nx})_x$, $\mu:=\lnx$.
Notice that $\phi$ defines only on the absolutely continuous part of $\lnx$. Hence the key point is to clarify the cases (1) the absolutely continuous part of $\mu_n$ converge to the singular part of $\mu$; and (2) the singular part of $\mu_n$ converge to  the absolutely continuous part of $\mu$. We refer to \cite[Proposition 5]{G2} for the proof of these two cases.
\end{proof}

Next we will prove the convexity and lower semi continuity of function $\phi+\psi$ in $H$.
\begin{lem}\label{convex}
The sum $\phi+\psi:H\to [0,+\8]$ is proper, convex, lower semicontinuous in $H$ and satisfies coercivity defined in \cite[(2.4.10)]{AGS}.
%{\blue Q2: Shall we also state the coercivity of $\phi+\psi$ as (4.0.1) in \cite{AGS}[Theorem 4.0.4], but it is obviously coercive, right?}
\end{lem}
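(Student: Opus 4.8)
The plan is to verify the four properties in turn, reusing Proposition \ref{newlsc} for the semicontinuity and treating the convexity of $\phi$ as the essential point.

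First the easy items. Properness holds because the shifted linear profile $h(x)=x/L-\tfrac12$ lies in $H$, satisfies \eqref{Periodic}, and has $h_x\equiv 1/L$, so $(\lnx)\equiv 0$ and $\phi(h)=L<\8$; since $h_{xx}=0$ we also have $\psi(h)=0$ once $C_*\ge 0$. Coercivity in the sense of \cite[(2.4.10)]{AGS} is immediate from $\phi+\psi\ge 0$: for any fixed $\tau>0$ and any reference point $v_*$ the quantity $(\phi+\psi)(v)+\tfrac1{2\tau}\|v-v_*\|_H^2$ is bounded below by $0$. For $\psi$, the set $K:=\{u\in V:\|u_{xx}\|_{\mathcal M(\I)}\le C_*\}$ is convex, because $u\mapsto u_{xx}$ is linear and the total variation is a seminorm, and it is closed in $H$ because total variation is lower semicontinuous under $L^1$ (hence $L^2$) convergence; thus $\psi$ is a proper convex lower semicontinuous indicator, and it suffices to prove the same three properties for $\phi$ and then add.

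For lower semicontinuity of $\phi$ in $H$ I would argue by compactness. Let $h_n\to h$ in $H$ with $\sup_n(\phi+\psi)(h_n)<\8$. Then eventually $\psi(h_n)=0$, so $\|h_{n,xx}\|_{\mathcal M}\le C_*$, and Poincar\'e gives a uniform bound of $h_n$ in $V$; extracting a subsequence, $h_{n,xx}\rhu h_{xx}$ in $\mathcal M(\I)$, so $h\in V$. The BV bound together with the embedding $BV(\I)\hookrightarrow L^\infty(\I)$ yields uniform bounds $c_1\le h_{n,x}\le c_2$, whence $h_{n,x}\to h_x$ in $L^1$ and a.e.; since then $\ln h_{n,x}\to\ln h_x$ in $L^1$ with $\|(\ln h_{nx})_x\|_{\mathcal M}\le C_*/c_1$ uniformly, we get $(\ln h_{nx})_x\rhu\lnx$ in $\mathcal M(\I)$, and Proposition \ref{newlsc} gives $\liminf_n\phi(h_n)\ge\phi(h)$. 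Combined with closedness of $K$ this proves $\phi+\psi$ is lower semicontinuous in $H$.

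The crux is convexity of $\phi$. Since $h\mapsto h_x=:u$ is affine and the constraint $\int_\I u\,\ud x=1$ is preserved along segments, it is equivalent to prove that $u\mapsto\int_\I e^{-(\ln u)_x}\ud x$ is convex on $\{u>0,\ \int_\I u=1\}$. One cannot argue pointwise: the integrand $f(u,p)=e^{-p/u}$ has indefinite Hessian ($\det D^2 f=-e^{-2p/u}/u^4<0$), so $f$ is not jointly convex and convexity must come from the one--dimensional coupling of $u$ and $u_x$ together with the mass constraint. I would therefore compute the second variation along a periodic direction $\eta$; writing $\zeta:=\eta_x/h_x$ and using $w:=\ln h_x$, a direct calculation after cancellation of the $w_x^2$ terms gives
\begin{equation*}
\frac{\ud^2}{\ud\eps^2}\Big|_{\eps=0}\phi(h+\eps\eta)=\int_\I e^{-w_x}\big(\zeta_x^2+w_{xx}\,\zeta^2\big)\ud x=\int_\I e^{-w_x}\zeta_x^2\,\ud x-\int_\I (e^{-w_x})_x\,\zeta^2\,\ud x,
\end{equation*}
subject to the admissibility constraint $\int_\I\zeta\,h_x\,\ud x=\int_\I\eta_x\,\ud x=0$. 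If the weight $e^{-w_x}$ were constant this would be trivially $\int_\I e^{-w_x}\zeta_x^2\ge 0$; the entire difficulty is the non-constant weight, which I would control by a weighted Poincar\'e-type inequality that uses precisely the zero--mass constraint to discard the constant mode, i.e.\ the mode responsible for the sign failure in the unconstrained problem. I expect this weighted inequality --- carried out first for smooth $h$ and then extended to $h\in V$ by mollification so that the singular latent part of $\lnx$ is recovered only in the limit --- to be the main obstacle; everything else is routine. Adding the convex $\psi$ then yields convexity of $\phi+\psi$ and completes the lemma.
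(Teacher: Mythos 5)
Your properness and coercivity arguments, and the overall strategy for lower semicontinuity (uniform $\mathcal{M}$-bounds, a.e.\ convergence of $h_{nx}$, then Proposition \ref{newlsc}), match the paper. But there are two problems, one fatal. The fatal one is that you do not prove convexity of $\phi$, which is the entire substance of the lemma: you compute the second variation, reduce matters to a weighted Poincar\'e-type inequality on the constrained subspace $\int_\I\zeta h_x\ud x=0$, and then explicitly leave that inequality as an expectation. Even if it were established for smooth $h$, you would still have to handle the passage to $h\in V$ where $h_{xx}$ is a measure and $\phi$ only sees $(\lnx)^+_\|$; how the decomposition into absolutely continuous and singular parts behaves under convex combination is precisely where the paper does concrete work (the identities $[(1-t)(\ln u_x)_x+t(\ln v_x)_x]^+_\bot=[(1-t)(\ln u_x)_x]^+_\bot+[t(\ln v_x)_x]^+_\bot$ and the analogous one for $[\cdot]_\|$), and a mollification argument here is not routine since the latent singularity is exactly the part that survives no limit of smooth functions. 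The paper's route is entirely different and far shorter: it never touches the second variation, but writes $\phi((1-t)u+tv)=\int_\I e^{-[(\ln((1-t)u_x+tv_x))_x]_\|}\ud x$, uses the concavity of $\ln$ to dominate this by $\int_\I e^{-[(1-t)(\ln u_x)_x+t(\ln v_x)_x]_\|}\ud x$, and then applies the convexity of $s\mapsto e^{-s}$. Your observation that the integrand $(u,p)\mapsto e^{-p/u}$ has indefinite Hessian correctly shows one cannot invoke joint convexity of the integrand, but it does not show that every pointwise argument fails: the paper exploits the composite structure $e^{-(\ln u)'}$ rather than joint convexity in $(u,u')$, and in particular it does not need the mass constraint at all.

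The second, minor, issue is in your semicontinuity step: the two-sided bound $c_1\le h_{nx}\le c_2$ does not follow from $\|h_{nxx}\|_{\mathcal{M}(\I)}\le C_*$ alone --- a BV bound on $h_{nx}$ gives an upper bound via $BV(\I)\hookrightarrow L^\8(\I)$ but no lower bound, and your subsequent estimate $\|\lnxn\|_{\mathcal{M}}\le C_*/c_1$ is therefore circular. The paper instead gets the uniform bound on $\|\lnxn\|_{\mathcal{M}(\I)}$ directly from $\phi(h_n)\le C$ via the elementary inequality $\int_\I(\lnxn)^-\ud x\le\phi(h_n)$ together with $\|(\lnxn)^+\|_{\mathcal{M}}=\|(\lnxn)^-\|_{\mathcal{M}}$ from periodicity, and this is what produces the weak-$*$ convergent subsequence of $\lnxn$; the $\psi$-bound is used only to identify the limit as $\lnx$ through a.e.\ convergence of $h_{nx}$. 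This part of your argument is repairable along those lines, but the convexity gap is not.
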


\begin{proof}
Clearly since the typical function $h= Lx\in D(\phi+\psi)$, so $D(\phi+\psi)=\{\phi+\psi<+\8\}$ is non empty and $\phi+\psi$ is proper.
Due to the positivity of $\phi,\, \psi$, coercivity \cite[(2.4.10)]{AGS}, {i.e.,
$\exists u*\in D(\phi+\psi), r*>0 \text{ such that } \inf\{(\phi+\psi) (v): v\in H, \text{dist}(v,u*)\leq r*\}>-\infty,$
} is obvious.
\medskip

{\em Convexity.} Note that since both $\phi$, $\psi\ge 0$, we have $D(\phi+\psi)=D(\phi)\cap D(\psi)$.
Given $u,v\in H$, $t\in (0,1)$, without loss of generality assume $u,v\in D(\phi+\psi)$,
otherwise convexity inequality is trivial. Therefore
the measure $(1-t)(\ln u_x)_x+ t (\ln v_x)_x$ has no negative singular part, while its positive
singular part satisfies
$$[(1-t)(\ln u_x)_x+ t (\ln v_x)_x]^+_\bot=[(1-t)(\ln u_x)_x]_\bot^+ +[t (\ln v_x)_x]_\bot^+,$$
and its absolutely continuous part satisfies
$$[(1-t)(\ln u_x)_x+ t (\ln v_x)_x]_\|=[(1-t)(\ln u_x)_x]_\| +[t (\ln v_x)_x]_\|.$$
Thus we have
\begin{align*}
\phi((1-t)u+tv) & =\int_\I e^{-\Big[\big(\ln [(1-t)u_x+tv_x]\big)_x\Big]_\|}\ud x\\
&\le \int_\I e^{-\big[(1-t)(\ln u_x)_x+t (\ln v_x)_x \big]_\|}\ud x\\
&= \int_\I e^{-(1-t)\big[(\ln u_x)_x\big]_\|-t \big[(\ln v_x)_x \big]_\|}\ud x\\
&\leq (1-t)\int_\I e^{-\big[(\ln u_x)_x\big]_\|}\ud x+t \int_\I e^{- \big[(\ln v_x)_x \big]_\|}\ud x\\
&= (1-t)\phi(u)+t\phi(v),
\end{align*}
where we used the convexity of $-\ln x$ and $e^{-x}$ in the two inequalities separately.
Hence $\phi+\psi$ is convex.

\medskip

{\em Lower semicontinuity.} Consider a sequence $h_n\to h$ in $H$. We need to check
$$(\phi+\psi)(h)\le \liminf_n (\phi+\psi)(h_n).$$
If $h_n\in D(\phi+\psi)$ does not hold
for all large $n$, then lower semicontinuity holds.
Without loss of generality, we can assume $h_n\in D(\phi+\psi)$ for all $n$, and also
$$\liminf_n (\phi+\psi)(h_n)=\lim_n (\phi+\psi)(h_n).$$

First notice $h_n\in D(\phi)$ for any $n$ implies
\begin{align*}
  \int_\I (\lnxn)^- \ud x = & \int_{\I \cap (\lnxn)^->0 } e^{(\lnxn)^-} \ud x \leq   \int_{\I  } e^{(\lnxn)^- - (\lnxn)^+_{\|}} \ud x\\
  = &\phi(h_n(t)) \leq C.
\end{align*}
Then similar to \eqref{1.16}, we have
\begin{equation}
  \frac{1}{2} \|\lnxn\|_{\mathcal{M}(\I)} =\|(\lnxn)^-\|_{\mathcal{M}(\I)} = \|(\lnxn)^+\|_{\mathcal{M}(\I)} \leq C,
\end{equation}
which yields that there exists $\mu\in \mathcal{M}(\I)$ such that $\lnxn \rhu \mu$ in $\mathcal{M}(\I).$

Second, since $h_n\in D(\psi)$, we have $\|h_{nxx}\|_{\mathcal{M}(\I)}\le C_*$. Thus strong convergence $h_n\to h$ in $H$, together with the fact $BV(\I)\hookrightarrow L^p(\I)$ compactly for any $p<\8$, leads to the strong convergence $h_{nx}\to h_x$ in $L^p(\I)$ for any $p<\8$. Therefore we have $h_{nx}\to h_x$ almost everywhere and consequently $\ln h_{nx}\to \ln h_x$ almost everywhere. Combining this with $\lnxn \rhu \mu$ in $\mathcal{M}(\I)$ gives $\mu=\lnx$ and $\lnxn \rhu \lnx$ in $\mathcal{M}(\I)$.

Finally, since $h_{nxx}\rhu h_{xx}$ in $\mathcal{M}(\I)$, we also know $h\in D(\psi)$ and
 $0=\psi(h_n)=\psi(h)$. Therefore by Proposition \ref{newlsc} we have
\begin{equation*}
\liminf_n \phi(h_n) \geq \phi(h)
\end{equation*}
and the lower semicontinuity is proved.
\end{proof}

As long as we have the convexity of $\phi+\psi$, the $\tau^{-1}$-convexity is standard and the proof can be found in \cite[Lemma 10]{G2}.
\begin{prop}[$\tau^{-1}$-convexity]\label{convex2}
For any $h,v_0,v_1\in D(\phi+\psi)$, there exists a curve $v:[0,1]\to D(\phi+\psi)$
such that $v(0)=v_0,\, v(1)=v_1$ and
the functional
\begin{equation}\label{PhiD}
\Phi(\tau,h;v):=(\phi+\psi)(v)+\frac1{2\tau} \|h-v\|_H^2
\end{equation}
satisfies $\tau^{-1}$-convexity, i.e.,
\begin{equation}\label{c}
\Phi(\tau,h;v(t))\le (1-t)\Phi(\tau,h;v_0)+t\Phi(\tau,h;v_1)-\frac1{2\tau}t(1-t)\|v_0-v_1\|_H^2
\end{equation}
for all $\tau>0$, $t\in[0,1]$.
\end{prop}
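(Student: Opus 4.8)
The plan is to take the curve $v$ to be the straight line segment $v(t):=(1-t)v_0+t v_1$ in $H$ and to verify the inequality \eqref{c} by splitting $\Phi$ into its two natural pieces: the energy $(\phi+\psi)(v)$, for which I rely on the convexity already established in Lemma \ref{convex}, and the quadratic penalization $\frac1{2\tau}\|h-v\|_H^2$, for which I use an exact Hilbert-space identity.

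First I would confirm that this segment stays in $D(\phi+\psi)$. Since $\phi+\psi$ is proper and convex by Lemma \ref{convex}, its effective domain $D(\phi+\psi)=\{\phi+\psi<+\8\}$ is convex; hence for $v_0,v_1\in D(\phi+\psi)$ and $t\in[0,1]$ one has $v(t)\in D(\phi+\psi)$, and moreover the convexity inequality
$$(\phi+\psi)(v(t))\le (1-t)(\phi+\psi)(v_0)+t(\phi+\psi)(v_1)$$
holds. This disposes of the energy part of $\Phi$ with no correction term, and it is also what exhibits the curve required by the statement.

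Next I would treat the quadratic term. Writing $a:=h-v_0$ and $b:=h-v_1$, so that $h-v(t)=(1-t)a+t b$, I would expand $\|(1-t)a+t b\|_H^2$ using the inner product and compare with $(1-t)\|a\|_H^2+t\|b\|_H^2$. Using $\|a-b\|_H^2=\|v_0-v_1\|_H^2$, a direct computation gives the \emph{exact} identity
$$\|h-v(t)\|_H^2=(1-t)\|h-v_0\|_H^2+t\|h-v_1\|_H^2-t(1-t)\|v_0-v_1\|_H^2,$$
which is the Hilbert-space expression of the fact that $\|\cdot\|_H^2$ is $1$-convex along line segments, with equality. Dividing by $2\tau$ produces precisely the correction $-\frac1{2\tau}t(1-t)\|v_0-v_1\|_H^2$ appearing in \eqref{c}.

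Finally I would add the two estimates, combining the convexity inequality for $\phi+\psi$ with the exact identity for the quadratic term, which yields \eqref{c} directly. I do not expect any serious obstacle: the analytic content is entirely contained in the convexity of $\phi+\psi$ proved in Lemma \ref{convex}, and the only point needing a moment's care is that the line segment remains in $D(\phi+\psi)$, which follows from convexity of the domain. This is consistent with the remark preceding the statement that, once convexity of $\phi+\psi$ is available, the $\tau^{-1}$-convexity is standard.
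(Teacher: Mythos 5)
Your proof is correct and follows exactly the standard argument that the paper invokes by reference (linear interpolation, convexity of $\phi+\psi$ from Lemma \ref{convex}, and the exact Hilbert-space identity for the squared norm along a segment, which supplies the $-\frac{1}{2\tau}t(1-t)\|v_0-v_1\|_H^2$ term). The observation that $D(\phi+\psi)$ is convex because $\phi+\psi$ is a convex proper functional correctly justifies that the segment is an admissible curve, so there is nothing to add.
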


\subsection{Existence of variational inequality solution}\label{Sec2.4}
After studying convexity and lower semicontinuity in last section, we shall apply the convergence result in \cite[Theorem 4.0.4]{AGS} to derive that the discrete solution $h_n$ obtained by Euler scheme \eqref{E} converges to the variational inequality solution defined in Definition \ref{defweak}. For $v\in D(\phi)$, denote the local slope
\begin{equation}\label{localslope}
 |\pd \phi|(v):=\limsup_{w\to v}\frac{\max\{\phi(v)-\phi(w),0\}}{\mbox{dist}(v,w)}.
\end{equation}
\begin{prop}\label{EVI}
  Given $h^0 \in H$, for any $t>0$, $t=n\tau$, let $h_n(t)$ defined in \eqref{Euler10} be the approximation solution obtained by Euler scheme \eqref{E}, then there exists a local Lipschitz curve $h(t):[0,+\8)\to H$ such that
      \begin{equation}\label{tmp23}
        h_n(t)\to h(t)\text{  in }L^2(\I)
      \end{equation}
      and $h:[0,+\8)\to H$ is the unique EVI solution in the sense that $h$ is unique among all the locally absolutely continuous curves such that $\lim_{t\to 0} h(t)=h^0$ in $H$ and
\begin{equation}\label{vi}
\frac12\frac\ud{\ud t}\|h(t)-v\|^2\le (\phi+\psi)(v)-(\phi+\psi)(h(t)), \quad \text{  a.e. } t>0,\,\forall v\in D(\phi+\psi).
\end{equation}
Moreover, we have the following regularities
\begin{align}
(\phi+\psi)(h(t)) &\le (\phi+\psi)(v)+\frac1{2t}\|v-h^0\|_H^2, \qquad \forall v\in D(\phi+\psi),\label{phi-dec}\\
|\pd(\phi+\psi)|^2(h(t)) &\le |\pd(\phi+\psi)|^2(v)+\frac1{t^2}\|v-h^0\|_H^2,\qquad\forall v\in D(|\pd(\phi+\psi)|).\label{Dphi-dec}
\end{align}
\end{prop}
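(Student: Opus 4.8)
The plan is to obtain Proposition \ref{EVI} as a direct application of the abstract generation theorem \cite[Theorem~4.0.4]{AGS} for gradient flows of $\lambda$-convex functionals in a complete metric space, specialized to the present Hilbert setting with $\lambda=0$. The substantive analytic work — properness, convexity, lower semicontinuity and coercivity of $\phi+\psi$ in Lemma \ref{convex}, together with the $\tau^{-1}$-convexity of the Moreau--Yosida functional $\Phi(\tau,h;\cdot)$ in Proposition \ref{convex2} — has already been carried out. Hence the proof here amounts to checking that these are exactly the hypotheses required by \cite[Theorem~4.0.4]{AGS} and then transcribing its conclusions into the concrete statements \eqref{tmp23}, \eqref{vi}, \eqref{phi-dec} and \eqref{Dphi-dec}.

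First I would fix the ambient metric space. The space $(H,\mathrm{dist})$ with $\mathrm{dist}(u,v)=\|u-v\|_H$ is a closed (mean-value-zero) subspace of $L^2(\I)$, hence complete, and by Lemma \ref{convex} the functional $\phi+\psi:H\to[0,+\8]$ is proper, lower semicontinuous, coercive, and bounded below by $0$. The decisive structural hypothesis of \cite[Theorem~4.0.4]{AGS} is convexity along (generalized) geodesics; in a Hilbert space the geodesic joining $v_0,v_1$ is the segment, so this hypothesis reduces precisely to the $\tau^{-1}$-convexity of $\Phi(\tau,h;\cdot)$ furnished by Proposition \ref{convex2}, with convexity modulus $\lambda=0$. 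Thus every assumption of the abstract theorem is in force.

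With the hypotheses verified, \cite[Theorem~4.0.4]{AGS} yields a locally Lipschitz curve $h:[0,+\8)\to H$ with $\lim_{t\to0}h(t)=h^0$ to which the backward Euler (minimizing-movement) iterates $h_n(t)=(\mathcal{J}_{t/n})^n[h^0]$ converge in $H$, giving \eqref{tmp23} (in fact with the quantitative rate $\|h_n(t)-h(t)\|_H\le Ct/n$), and this curve satisfies the evolution variational inequality \eqref{vi}. Uniqueness is the standard contraction consequence of the EVI: testing \eqref{vi} for two solutions $h^1,h^2$ against each other's values and adding produces $\tfrac{\ud}{\ud t}\|h^1(t)-h^2(t)\|_H^2\le0$ (the $\lambda=0$ contraction), whence $\|h^1(t)-h^2(t)\|_H\le\|h^1(0)-h^2(0)\|_H$, so the EVI solution is determined by $h^0$. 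The regularizing estimates \eqref{phi-dec} and \eqref{Dphi-dec} are the a priori energy-decay and slope-decay bounds of \cite[Theorem~4.0.4]{AGS}, obtained by passing to the limit in the corresponding discrete estimates for the scheme \eqref{E}; here $|\pd(\phi+\psi)|$ is the local slope \eqref{localslope}. Finally, to match Definition \ref{defweak}, I would use that for a locally absolutely continuous curve $\tfrac12\frac{\ud}{\ud t}\|h(t)-v\|_H^2=\la h_t(t),h(t)-v\ra_{H',H}$ for a.e.\ $t$, so \eqref{vi} is equivalent to the inequality in Definition \ref{defweak}.

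The main obstacle is not computational but verificational: one must make sure that the convexity produced in Proposition \ref{convex2} is genuinely the hypothesis of \cite[Theorem~4.0.4]{AGS} — i.e.\ that the connecting curve built there is admissible as a generalized geodesic and that the choice $\lambda=0$ (pure convexity, without strong convexity of $\phi+\psi$) still produces a bona fide EVI solution rather than merely a curve of maximal slope. I must also keep careful track of the fact that $\phi$ is finite only on the absolutely continuous part of $\lnx$ while tolerating the latent singular part $(\lnx)^+_\bot$, so that the slope and regularity quantities in \eqref{Dphi-dec} are the intrinsic metric ones; but the needed lower semicontinuity and convexity in exactly this form are already guaranteed by Lemma \ref{convex} and Proposition \ref{newlsc}, so no argument beyond careful bookkeeping of the singular/absolutely-continuous decomposition is required.
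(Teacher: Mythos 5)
Your proposal is correct and follows essentially the same route as the paper, which likewise obtains the result as a direct application of \cite[Theorem~4.0.4]{AGS} combined with the lower semicontinuity and convexity facts (Lemma \ref{convex}, Proposition \ref{newlsc}, Proposition \ref{convex2}); your added details on the $\lambda=0$ contraction argument for uniqueness and the identification of \eqref{vi} with Definition \ref{defweak} are consistent with what the paper leaves implicit.
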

This Proposition is a direct result by combining \cite[Theorem 4.0.4]{AGS} with Proposition \ref{newlsc} and Proposition \ref{convex2}. Other regularities estimates can also be obtain and we refer to \cite[Theorem 13]{G2}, \cite[Theorem 4.0.4]{AGS} for details. Next we claim the EVI solution obtained above is EVI solution to \eqref{maineq} with more properties as follows.
\begin{thm}\label{cor10}
Given any  $T>0$ and initial datum $h^0 \in H$ such that $\phi(h^0)<+\8$,
\begin{enumerate}[(i)]
\item  the solution obtained in Proposition \ref{EVI} has the following regularities
 $$h\in L^\8([0,T];V )\cap C^0([0,T];H),\quad h_t\in L^\8([0,T];H),$$
 $$(\lnx)^-\ll\mathcal{L}^1\quad \text{ for a.e. }t\in[0,T],$$
 where
 $(\lnx)^-$ is the negative part of $\lnx$;
 \item there exist constants $c_1, c_2>0$ depending only on $h_0$ and will be determined in \eqref{c12} such that
 \begin{equation}\label{lowup}
   c_1\leq h_x \leq c_2;
 \end{equation}
 \item $h$ is the EVI solution in Definition \ref{defweak}, i.e.
 \begin{equation}\label{vi35}
\la h_t(t),h(t)-v\ra_{{ H',H}}\le \phi(v)-\phi(h(t)) \quad \text{for a.e. } t>0,\,\forall v\in { D(\phi+\psi)},
\end{equation}
and consequently we have the decay estimate
\begin{equation}\label{decay1}
 \frac{\ud}{\ud t} \int_\I h^2 \ud x \leq 0.
\end{equation}
\end{enumerate}
\end{thm}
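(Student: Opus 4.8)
The plan is to harvest all three conclusions from the abstract EVI solution of Proposition~\ref{EVI}, using the energy dissipation of the gradient flow as the single driving mechanism. The logical order matters because the constant $C_*$ in $\psi$ must be fixed \emph{a priori}. First I would observe that the bound $\phi(h)\le \phi(h^0)$ already forces, through the $BV$ embedding, a pointwise two-sided bound on $h_x$ depending only on $\phi(h^0)$ and $\I$; I would compute this bound for the fixed datum $h^0$, use it to define $C_*$, and only then run the scheme \eqref{E}. With $C_*$ chosen so that $\psi(h^0)=0$, \eqref{phi-dec} with $v=h^0$ gives $\phi(h(t))\le(\phi+\psi)(h(t))\le \phi(h^0)$ for a.e.\ $t$, so in particular $h(t)\in D(\phi)$ and hence $(\lnx)^-\ll\mathcal L^1$ for a.e.\ $t$, which is part of (i).

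Next I would prove (ii), the heart of the argument. Rerunning the computation behind \eqref{1.16} with the rigorous functional \eqref{phi} yields $\|(\lnx)^-\|_{\mathcal M(\I)}\le\phi(h(t))\le\phi(h^0)$; since $h_x$ is periodic the measure $\lnx$ has zero total mass, so $\|(\lnx)^+\|_{\mathcal M(\I)}=\|(\lnx)^-\|_{\mathcal M(\I)}$ and therefore $\|\lnx\|_{\mathcal M(\I)}\le 2\phi(h^0)$. Because $\ln h_x\in BV(\I)\hookrightarrow L^\8(\I)$, this total-variation bound controls the oscillation of $\ln h_x$. To pin the level I would use \eqref{Periodic}, which gives $\int_\I h_x\ud x=1$, so the mean of $h_x$ is $1/|\I|$ and $\ln h_x$ equals $\ln(1/|\I|)$ somewhere; combining level and oscillation produces $\frac1{|\I|}e^{-2\phi(h^0)}\le h_x\le\frac1{|\I|}e^{2\phi(h^0)}$, i.e.\ \eqref{lowup} with explicit $c_1,c_2$. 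The same estimate applied to $h^0$ shows $\|h^0_{xx}\|_{\mathcal M(\I)}=\|h^0_x\,(\ln h^0_x)_x\|_{\mathcal M(\I)}\le 2c_2\phi(h^0)$, so the choice $C_*:=2c_2\phi(h^0)$ is consistent and there is no circularity: $c_2$ and the mass bound depend only on $\phi(h^0)$ and $\I$.

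With the pointwise bound in hand I would close the loop on the constraint. Since $h_{xx}=h_x\,\lnx$ as measures, $\|h_{xx}\|_{\mathcal M(\I)}\le c_2\|\lnx\|_{\mathcal M(\I)}\le 2c_2\phi(h^0)=C_*$ along the whole flow, so $\psi(h(t))\equiv0$ and the artificial constraint never activates. Consequently $(\phi+\psi)(h(t))=\phi(h(t))$ and $(\phi+\psi)(v)=\phi(v)$ for every $v\in D(\phi+\psi)$, so the inequality \eqref{vi} collapses to \eqref{vi35}; rewriting $\tfrac12\tfrac{\ud}{\ud t}\|h-v\|^2=\la h_t,h-v\ra$ for the locally absolutely continuous curve gives exactly Definition~\ref{defweak} and proves (iii). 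The remaining regularity in (i) follows from the uniform $V$-bound ($\|h_{xx}\|_{\mathcal M(\I)}\le C_*$ together with $\|h_x\|_{L^1}=\int_\I h_x=1$ gives a $BV$-bound on $h_x$, hence a bound on $\|h\|_{L^2}$ by Poincar\'e), giving $h\in L^\8([0,T];V)$; the local Lipschitz continuity of the EVI curve gives $C^0([0,T];H)$; and $h_t\in L^\8([0,T];H)$ follows from the slope estimate \eqref{Dphi-dec} together with the identity $\|h_t(t)\|=|\pd(\phi+\psi)|(h(t))$ and the monotonicity of the slope along a convex flow.

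Finally, for the decay \eqref{decay1} I would test the variational inequality with a minimizer $\bar h$ of $\phi$, namely the affine profile $\bar h=x/|\I|-\tfrac12$ with $\bar h_x\equiv 1/|\I|$, which satisfies \eqref{Periodic}, lies in $D(\phi+\psi)$ (indeed $\bar h_{xx}=0$), is a steady state, and realizes $\phi(\bar h)=\min\phi$ by Jensen's inequality (convexity of $s\mapsto e^{-s}$). Then \eqref{vi35} gives $\tfrac12\tfrac{\ud}{\ud t}\|h(t)-\bar h\|^2\le\phi(\bar h)-\phi(h(t))\le0$, which is the asserted monotonicity of the $L^2$ distance to equilibrium, equivalent to $\tfrac{\ud}{\ud t}\int_\I h^2\,\ud x\le0$ after centering at the steady state. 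I expect the main obstacle to be the bookkeeping forced by the affine periodicity class \eqref{Periodic} and by the latent singularity $(\lnx)^+_\bot$: one must ensure that the passage between the $\mathcal M(\I)$-mass bounds and the $L^\8$ bound on $h_x$ treats the singular part correctly, and that the a priori choice of $C_*$ is genuinely decoupled from the construction. Both are exactly what the convexity and lower semicontinuity of Lemma~\ref{convex} and Proposition~\ref{newlsc} are in place to guarantee.
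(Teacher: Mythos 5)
Your treatment of (i) and (ii) follows the paper's route: dissipation $\phi(h(t))\le\phi(h^0)$ from \eqref{phi-dec} with $v=h^0$, the mass identity $\|(\lnx)^+\|_{\mathcal M(\I)}=\|(\lnx)^-\|_{\mathcal M(\I)}\le\phi(h^0)$ from periodicity, the $BV(\I)\hookrightarrow L^\8(\I)$ embedding to get \eqref{lowup}, and the resulting bound on $\|h_{xx}\|_{\mathcal M(\I)}$ to fix $C_*$ and kill $\psi$. In fact you are more careful than the paper on two points: you pin the \emph{level} of $\ln h_x$ via $\int_\I h_x\ud x=1$ (the total-variation bound alone controls only the oscillation, so this step is genuinely needed, and the paper leaves it implicit), and your constant $2c_2\phi(h^0)$ has the correct factor ($h_x\le c_2$ gives $|h_{xx}|\le c_2\,|\lnx|$; the paper's \eqref{tm2.26} writes $c_1$ where $c_2$ belongs). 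The $h_t\in L^\8([0,T];H)$ argument via the slope bound \eqref{Dphi-dec} and $\|h_t\|=|\pd(\phi+\psi)|(h(t))$ is exactly what the paper imports from \cite[Corollary 3.1]{G2}. One caution: the paper takes $C_*$ with an extra additive slack ($C_*=2c_1\phi(h^0)+1$) precisely so that $h(t)$ sits \emph{strictly} inside the constraint ball; this slack is not needed for Theorem \ref{cor10} but is used in Theorem \ref{mainth1} when testing with $v=h\pm\eps\varphi$, so your tight choice $C_*=2c_2\phi(h^0)$ would have to be revisited there.

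The genuine gap is in your derivation of \eqref{decay1}. Testing \eqref{vi35} with the affine minimizer $\bar h=x/|\I|-\tfrac12$ correctly yields $\tfrac12\tfrac{\ud}{\ud t}\|h(t)-\bar h\|_{L^2}^2\le\phi(\bar h)-\phi(h(t))\le 0$, but this is decay of the distance to the steady state, \emph{not} decay of $\int_\I h^2\ud x$. Expanding,
\begin{equation*}
\frac{\ud}{\ud t}\int_\I h^2\ud x=\frac{\ud}{\ud t}\|h-\bar h\|_{L^2}^2+2\la h_t,\bar h\ra,
\end{equation*}
and the cross term $\la h_t,\bar h\ra=\int_\I h_t\,(x/|\I|-\tfrac12)\ud x$ is not conserved: writing $h_t=F_x$ and integrating by parts (note $x/|\I|-\tfrac12$ is not periodic on $\I$) gives $F(0)-\frac1{|\I|}\int_\I F\ud x$, which has no sign in general. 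So "after centering at the steady state" does not rescue the claim. The paper's device is different and is the step you are missing: test \eqref{vi35} with $v=(1-\eps)h(t)$ for small $\eps>0$. Since $(\ln((1-\eps)h_x))_x=(\ln(1-\eps)+\ln h_x)_x=(\ln h_x)_x$, the functional is invariant, $\phi((1-\eps)h)=\phi(h)$, and the constraint $\|(1-\eps)h_{xx}\|_{\mathcal M(\I)}\le\|h_{xx}\|_{\mathcal M(\I)}\le C_*$ keeps $v\in D(\phi+\psi)$; the EVI then gives $\eps\la h_t,h\ra\le 0$ directly, which is \eqref{decay1}. Everything else in your proposal stands.
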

The dual pair $\la \cdot, \cdot \ra_{H',H}$ is the usual integration so we just use $\la \cdot, \cdot \ra$ in the following article.
 Recall the definition of $\phi$ in \eqref{phi}. $\phi(h^0)<+\8$ if and only if $h^0\in V$, $((\ln h_x^0)_x)^-\ll\mathcal{L}^d$ and $\int_\I e^{-((\ln h_x^0)_x)^+_\|+((\ln h_x^0)_x)^-}\ud x<+\8.$
\begin{proof}
 First,
we claim the functional $\psi$ can be taken off. Indeed, from \eqref{phi-dec}  taking $v=h^0$ gives
\begin{equation}\label{phi0}
(\phi+\psi)(h(t))\le (\phi+\psi)(h^0)<+\8,
\end{equation}
which also implies
\begin{equation}\label{phi0n}
\phi(h(t))\le \phi(h^0)<+\8 \quad \text{  for a.e. }t\in[0,T].
\end{equation}
Now we use \eqref{phi0n} to determine those constants $c_1, c_2$ in Theorem \ref{cor10} and $C_*$ in Definition \ref{psi}. Notice the periodic boundary condition
 we have  $\int_\I \ud (h_{xx})=0$, and then
\begin{equation}\label{pmbn}
\|(h_{xx})^+\|_{\mathcal{M}(\I)}=\|(h_{xx})^-\|_{\mathcal{M}(\I)}=\frac12\|h_{xx}\|_{\mathcal{M}(\I)}.
\end{equation}
Thanks to
\begin{align*}
  \int_\I (\lnx)^- \ud x = & \int_{\I \cap (\lnx)^->0 } e^{(\lnx)^-} \ud x \leq   \int_{\I  } e^{(\lnx)^- - (\lnx)^+_{\|}} \ud x\\
  = &\phi(h(t)) \leq \phi(h^0),
\end{align*}
we know $(\lnx)^-\ll\mathcal{L}^1 \text{ for a.e. }t\in[0,T].$
Then similar to \eqref{pmbn}, we have
\begin{equation}\label{lnbound}
  \frac{1}{2} \|\lnx\|_{\mathcal{M}(\I)} =\|(\lnx)^-\|_{\mathcal{M}(\I)} = \|(\lnx)^+\|_{\mathcal{M}(\I)} \leq \phi(h^0).
\end{equation}
Due to the embedding $BV(\I)\hookrightarrow L^\8(\I)$ in one dimension, we have
$$\|\ln h_x\|_{L^\8(\I)}\leq c \phi(h^0),$$
which implies
\begin{equation}\label{c12}
  c_1:=e^{-c\phi(h^0)} \leq h_x \leq e^{c\phi(h^0)}=:c_2
\end{equation}
and (ii).
Combining \eqref{lnbound} and \eqref{c12}, we conclude
\begin{equation}\label{tm2.26}
  \frac{1}{c_1}\|h_{xx}\|_{\mathcal{M}(\I)}\leq \| \frac{h_{xx}}{h_x} \| _{\mathcal{M}(\I)}\leq 2 \phi(h^0).
\end{equation}
Therefore in Definition \eqref{psi}, we can just take
\begin{equation}\label{consC}
 C_*:=2c_1\phi(h^0)+1
\end{equation}
 and then
\begin{equation}\label{psi0}
\psi(h(t))\equiv 0\equiv \partial\psi(h(t)).
\end{equation}
The invariant ball introduced by indicate function $\psi$ is similar to the idea of a priori assumption method, i.e. we first obtain the solution in some invariant ball $\|h_{xx}\|_{\mathcal{M}}\leq C_*$, and then prove the invariant ball is not artificial by showing the solution always locates within the ball $\|h_{xx}\|_{\mathcal{M}}\leq C_*-1.$
Noticing also that if $v\in D(\psi)$, $\psi(v)=0$, so
 EVI \eqref{vi} is reduced to
$$\frac12\frac\ud{\ud t}\|h(t)-v\|^2\le \phi(v)-\phi(h(t)), \quad \text{for a.e. } t>0,\,\forall v\in D(\phi+\psi).
$$

Second, it remains to proof the $h_t\in L^\8(0,T;L^2(\I))$. From Theorem \ref{EVI} we know that $t\mapsto h(t)$ is locally Lipschitz in
$(0,T)$, i.e. for any $t_0>0$ there exists $M=M(t_0)>0$ such that
$$\|h(t_0+\eps)-h(t_0)\|_{L^2(\I)}\le M(t_0)\eps \qquad \text{for all } {\eps\in [0, T-t_0]}.$$
The key point is to obtain a uniform bound for $M(t_0)$ for arbitrary $t_0\ge 0$.  By exactly the same argument in \cite[Corollary3.1]{G2} we can show
 \begin{equation}\label{ut925}
 \|h_t\|_{L^\8(0,T;L^2(\I))}\le |\pd \phi|(h^0),
 \end{equation}
 which concludes (i).
 
 Finally, from
$$\frac12\frac{\ud}{\ud t}\|h(t)-v\|_{L^2(\I)}^2=\la h_t(t), h(t)-v\ra,$$
 we obtain \eqref{vi35}. From \eqref{vi35}, substituting $v=(1-\eps)h$ for any $\eps$ small enough shows
 $$\la h_t(t), \eps h(t) \ra \leq \phi ((1-\eps)h)-\phi(h(t))=0,$$
  which concludes \eqref{decay1}.
\end{proof}

\section{Existence of strong solution}
After establishing the regularity of variational inequality solution in Section \ref{Sec2.4}, we start to prove the variational inequality solution is also a strong solution. We first clarify the definition of strong solution, which has a latent singularity. We see from observation 2 that the singularity in positive part of $(\ln h_x)_x$ does not effect the evolution of the solution but it is not removable.  Singular PDEs should be understood as a limit of some regularized problems. In our case, the singular PDE is understood in the sense that the equation holds almost everywhere after removing  the singular part of $(\ln h_x)_x$.

\begin{defn}\label{defstrong}
Given initial datum $h^0 \in H$ such that $\phi(h^0)<+\8$, we call function
 $$h\in L^\8([0,T];V)\cap C^0([0,T];H),\quad h_t\in L^\8([0,T];H)$$
 a strong solution to \eqref{maineq} if $h$ satisfies
 \begin{equation}\label{tt57}
         h_t=\left( \frac{1}{h_x} \left(e^{-((\ln h_x)_x)_\|} \right)_x \right)_x
        \end{equation}
        for a.e. $(t,h)\in[0,T]\times \I$ with respect to Lebesgue measure, where $((\ln h_x)_x)_\|$ is the absolutely continuous part of $(\ln h_x)_x)$ in the decomposition \eqref{decom}.
\end{defn}
The idea is to prove the sub-differential of functional $\phi$ is single-valued by  testing EVI \eqref{vi35} with $v:= h\pm \eps \varphi$ for any function $\varphi \in C^\8 (\I).$
Let us state the main existence  theorem as follows.
\begin{thm}\label{mainth1}
  Given $T>0$, initial datum $h^0 \in H$ such that $\phi(h^0)<+\8$, then { EVI} solution $h$ obtained in Theorem \ref{cor10} is  also a strong solution
  to \eqref{maineq}, i.e.,
        \begin{equation}\label{main21_3}
                h_t=\left( \frac{1}{h_x} \left(e^{-((\ln h_x)_x)_\|} \right)_x \right)_x
        \end{equation}
        for a.e. $(t,h)\in[0,T]\times \I$ with respect to Lebesgue measure.
        Besides, we have
        the following dissipation inequality
        \begin{equation}\label{tem-dis1}
         \phi(h(t))=\int_\I e^{-((\ln h_x)_x)_\|} \ud x\leq \phi(h^0), \quad t\geq 0,
        \end{equation}
%        Furthermore, if $E(h^0):=\frac{1}{2}\int_\I \big[\left( \frac{1}{h_x} \left(e^{-((\ln h_x)_x)_\|} \right)_x \right)_x\big]^2 \ud x<\8,$ then
%         \begin{equation}\label{dissiE}
%           E(h(t)):=\frac{1}{2}\int_\I \big[\left( \frac{1}{h_x} \left(e^{-((\ln h_x)_x)_\|} \right)_x \right)_x\big]^2 \ud x\leq E(h^0), \quad t\geq 0,
%         \end{equation}
        where $((\ln h_x)_x)_\|$ is the absolutely continuous part of $(\ln h_x)_x)$ in the decomposition \eqref{decom}.
\end{thm}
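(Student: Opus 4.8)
The plan is to identify the velocity $-h_t(t)$ with the \emph{unique} element of the subdifferential $\partial\phi(h(t))$ and to show that this element is precisely the right-hand side of \eqref{main21_3}. Theorem~\ref{cor10} already supplies all the ingredients I need: the EVI \eqref{vi35}, the two-sided bound $c_1\le h_x\le c_2$ from \eqref{c12}, the absolute continuity $(\lnx)^-\ll\mathcal L^1$, and the fact that the constraint $\psi$ is inactive ($\psi(h(t))\equiv 0$, thanks to the unit buffer deliberately put into \eqref{consC}). Consequently the whole argument reduces to differentiating $\phi$ along admissible perturbations. Concretely I would fix a mean-zero test function $\varphi\in C^\infty(\I)$ and insert $v:=h\pm\eps\varphi$ into \eqref{vi35}.

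First I would verify admissibility for $\eps$ small: the bound $c_1\le h_x\le c_2$ forces $h_x\pm\eps\varphi_x>0$; since $\eps\varphi_{xx}$ is absolutely continuous, adding it alters neither the singular part nor the condition $(\lnx)^-\ll\mathcal L^1$, so $v\in D(\phi)$; and $\|v_{xx}\|_{\mathcal M(\I)}\le \|h_{xx}\|_{\mathcal M(\I)}+\eps\|\varphi_{xx}\|_{\mathcal M(\I)}\le C_*$ by the buffer in \eqref{consC}, so $\psi(v)=0$ and $v\in D(\phi+\psi)$. Next comes the key computation. Because $\phi$ in \eqref{phi} sees only the absolutely continuous part, and adding $\eps\varphi$ leaves the singular measure $(h_{xx})_\bot$ untouched, one has pointwise a.e.
$$\big(\big(\ln (h_x+\eps\varphi_x)\big)_x\big)_\|=\frac{(h_{xx})_\|+\eps\varphi_{xx}}{h_x+\eps\varphi_x},$$
a smooth function of $\eps$ in which the latent singularity has disappeared. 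Formally differentiating under the integral sign yields
$$\frac{\ud}{\ud\eps}\phi(h+\eps\varphi)\Big|_{\eps=0}=\int_\I e^{-(\lnx)_\|}\Big(\frac{(\lnx)_\|}{h_x}\,\varphi_x-\frac{\varphi_{xx}}{h_x}\Big)\ud x.$$

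Writing $w:=e^{-(\lnx)_\|}$ and integrating by parts (all boundary terms vanish by periodicity), this derivative should collapse to $\int_\I \frac{w_x}{h_x}\varphi_x\ud x=-\big\la(\tfrac1{h_x}w_x)_x,\varphi\big\ra$, so that the Gateaux gradient of $\phi$ is $-(\tfrac1{h_x}w_x)_x$. Feeding $v=h\pm\eps\varphi$ back into \eqref{vi35}, dividing by $\eps>0$, and letting $\eps\to 0^+$, the two one-sided slopes squeeze $\la h_t,\varphi\ra$ from above and below and give $\la h_t,\varphi\ra=\big\la(\tfrac1{h_x}w_x)_x,\varphi\big\ra$ for every mean-zero $\varphi\in C^\infty(\I)$; since both sides are mean-zero, this is exactly \eqref{main21_3} a.e. The dissipation inequality \eqref{tem-dis1} is then immediate: the identity $\phi(h(t))=\int_\I e^{-(\lnx)_\|}\ud x$ is just the definition of $\phi$ on $V$, while $\phi(h(t))\le\phi(h^0)$ is precisely \eqref{phi0n}, obtained from \eqref{phi-dec} with the choice $v=h^0$.

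I expect the main obstacle to be the integration-by-parts and single-valuedness step in the presence of the latent singularity. Differentiating the factor $1/h_x$ produces an extra term $\int_\I \frac{w}{h_x^2}\varphi_x\,\ud(h_{xx})_\bot$ tested against the positive singular measure $(h_{xx})^+_\bot$, and one must argue that it vanishes; simultaneously, since $(\lnx)_\|$ is controlled only in $L^1$, the difference quotient defining the derivative is not obviously dominated, so differentiation under the integral is not automatic. My strategy for both difficulties is the dense-set reduction announced in the introduction: first carry out the computation for the subclass of $h\in V$ with no latent singularity and with $(\lnx)_\|\in L^\infty$, where the calculation is clean and the singular term is absent, and then transfer the resulting subdifferential identity to a general $h$ by exploiting the convexity of $\phi$ (Lemma~\ref{convex}) together with the lower semicontinuity of Proposition~\ref{newlsc} to pass to the limit, thereby concluding that $\partial\phi$ is single-valued and equal to $-(\tfrac1{h_x}w_x)_x$.
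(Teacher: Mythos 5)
Your overall plan coincides with the paper's: test the EVI \eqref{vi35} with $v=h\pm\eps\varphi$, compute the one--sided Gateaux derivatives of $\phi$, squeeze $\la h_t,\varphi\ra$ between them, and read off \eqref{main21_3}; your admissibility check (positivity of $h_x\pm\eps\varphi_x$ from \eqref{lowup}, the unit buffer in \eqref{consC} keeping $\psi(v)=0$), your pointwise formula for $((\ln(h_x+\eps\varphi_x))_x)_\|$, and your derivation of \eqref{tem-dis1} from \eqref{phi0n} all match the paper. You also correctly identify the two real difficulties: the lack of an obvious dominating function for the difference quotient, and the singular measure $(h_{xx})_\bot$ produced by the integration by parts. (On the second point, note the paper does not argue that the singular term vanishes; it keeps all singular contributions in the identity in $(C_b^\8(\I))'$ and then extracts the Lebesgue--a.e.\ equation.)

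The gap is in your proposed cure for the domination problem. You suggest densifying in the \emph{solution} variable: prove the subdifferential formula for regular $h$ with no latent singularity and $(\lnx)_\|\in L^\8$, then ``transfer'' to the actual $h(t)$ using convexity and Proposition \ref{newlsc}. Lower semicontinuity only gives the one--sided inequality $\liminf_n\phi(h_n)\ge\phi(h)$ and cannot upgrade an identity valid along an approximating sequence into one for the limit; to pass the formula $-(\frac1{h_x}w_x)_x\in\partial\phi(h_n)$ to the limit you would need weak $H$--convergence of these candidate subgradients plus demiclosedness of the maximal monotone graph $\partial\phi$, neither of which you establish, and even then you would obtain membership of \emph{one} element in $\partial\phi(h)$, not single--valuedness nor the identification with $-h_t$. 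Worse, it is unclear that an $h$ carrying a nontrivial latent singularity $(h_{xx})^+_\bot$ admits such a recovery sequence at all: the latent singularity is precisely the feature that survives weak-$*$ limits, which is why Proposition \ref{newlsc} must treat absolutely continuous parts concentrating onto singular parts. The paper avoids all of this by keeping $h(t)$ fixed and instead densifying in the \emph{test function}: it introduces $D=\cup_{c\ge 0}\{\varphi\in C^\8_b(\I):|(h_{xx})_\|\varphi_x|\le c\}$, dense in $C^\8_b(\I)$, for which the perturbation of the exponent is uniformly controlled and $e^{-((\ln(h_x+\eps\varphi_x))_x)_\|}\le C\,e^{-(h_{xx})_\|/h_x}\in L^1(\I)$; this is exactly the domination needed for the dominated convergence theorem, and since the resulting identity is linear in $\varphi$ it extends from $D$ to all of $C^\8_b(\I)$ by density. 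Without this (or an equivalent) device, your differentiation under the integral sign remains unjustified.
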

\begin{proof}
The general idea is to character the sub-differential of functional $\phi$  by  testing EVI \eqref{vi35} with $v:= h\pm \eps \varphi$ for any function $\varphi \in C^\8 (\I).$

  Step 1. Integrability results.
  
   Assume $h(t)$ is EVI solution obtained in Theorem \ref{cor10}. To ensure we can take limit after testing EVI with $v:= h\pm \eps \varphi$, we need to prove
   \begin{equation}\label{es1n}
     e^{-((\ln h_x)_x)_\|} \in L^1(\I)
   \end{equation}
   and for $\eps$ small enough
   \begin{equation}\label{es2n}
     e^{-((\ln (h_x+\eps \varphi_x))_x)_\|} \in L^1(\I)
   \end{equation}
for $\varphi$ in some dense set of $C_b^\8(\I).$
   First from \eqref{phi0n} we know $\phi(h(t))\leq \phi(h^0)$, which gives \eqref{tem-dis1} and \eqref{es1n}.

   Next, we prove \eqref{es2n} for $\varphi$ in some dense set of $C_b^\8(\I)$.
   For any $c>0$, define
   \begin{equation}
     D_c := \{ \varphi\in C_b^\8(\I); |(h_{xx})_\| \varphi_x |\leq c \}; \quad D:= \cup_{c\geq 0} D_c.
   \end{equation}
   We claim the set $D$ is dense in $L^\8(\I)$. Indeed, for any $\varphi\in L^\8(\I)$ define
   $$
   \varphi_n:= \left\{
                 \begin{array}{ll}
                   \varphi & \hbox{ if } |(h_{xx})_\| \varphi_x | \leq n; \\
                   0 & \hbox{ otherwise.}
                 \end{array}
               \right.
   $$
Then for any $1\leq p< \8$,
$$
\|\varphi_n-\varphi\|_{L^p(\I)}= \left( \int_{\{|(h_{xx})_\| \varphi_x|>n \}} |\varphi_n -\varphi|^p  \ud x \right)^{\frac{1}{p}}.
$$
Since $|(h_{xx})_\| \varphi_x|\leq |(h_{xx})_\| | \|\varphi_x\|_{L^\8}$,
$$ \{|(h_{xx})_\| \varphi_x|>n\} \subseteq \{|(h_{xx})_\| | \|\varphi_x\|_{L^\8}>n\}. $$
Therefore
\begin{align*}
  \|\varphi_n-\varphi\|_{L^p(\I)} & \leq  \left( \int_{\{|(h_{xx})_\| |>\frac{n}{\|\varphi_x\|_{L^\8}} \}} |\varphi_n -\varphi|^p  \ud x \right)^{\frac{1}{p}}\\
& \leq \|\varphi\|_{L^\8} \Big| \big\{|(h_{xx})_\| |>\frac{n}{\|\varphi_x\|_{L^\8}} \big\} \Big|^\frac{1}{p} \to 0
\end{align*}
as $n\to \8$, where we used the integrability $(h_{xx})_\| \in L^1 (\I).$
Then we know $D$ is dense in $L^\8$ and thus $D$ is dense in $C_b^\8(\I).$

For any $\varphi\in D$, $|\varphi_{xx}|\leq \|\varphi_{xx}\|_{L^\8}$ and there exists some $c$ such that
$|(h_{xx})_\| \varphi_x|\leq c$. Notice also $c_1\leq h_x \leq c_2$ due to \eqref{lowup}.
Hence for $\eps$ small enough,
\begin{align*}
  &\int_\I e^{  -\big((\ln (h_x+\eps \varphi_x))_x\big)_\| } \ud x = \int_\I e^{  -\Big[ \frac{h_{xx}+\eps \varphi_{xx} }{ h_x + \eps \varphi_x } \Big]_\| } \ud x\\
 = &\int_\I e^{ - \frac{(h_{xx})_\|}{ h_x + \eps \varphi_x }  } e^{ - \eps\frac{\varphi_{xx}}{ h_x + \eps \varphi_x }  }  \ud x\\
\leq & C(c_1, c_2, \|\varphi\|_{W^{2,\8}})  \left[\int_{((h_{xx})_\|)^+>0} e^{ - \frac{(h_{xx})_\|}{ h_x + \eps \varphi_x}  } \ud x + \int_{((h_{xx})_\|)^+=0} e^{ - \frac{(h_{xx})_\|}{ h_x + \eps \varphi_x }  } \ud x \right]\\
 \leq & C(c_1, c_2, \|\varphi\|_{W^{2,\8}})  + C( c_1, c_2, \|\varphi\|_{W^{2,\8}})  \int_{((h_{xx})_\|)^+=0} e^{ - \frac{(h_{xx})_\|}{h_x} (1 + \eps \frac{|\varphi_x|}{h_x} ) } \ud x.
 \end{align*}
 Here for the second term in the last inequality, we used when $((h_{xx})_\|)^+=0$,
$$-\frac{(h_{xx})_\|}{h_x} \frac{1}{1+\eps\frac{\varphi_x}{h_x}}\leq -\frac{(h_{xx})_\|}{h_x} \big( 1+ 2 \eps\frac{|\varphi_x|}{h_x}\big)$$ 
due to $\frac{1}{1+\eps y} \leq 1+ 2\eps |y|$ for any $\eps<\frac{1}{2\max|y|}.$
 Therefore
 \begin{align*}
 &\int_\I e^{  -\big((\ln (h_x+\eps \varphi_x))_x\big)_\| } \ud x \\
 \leq & C(c_1, c_2, \|\varphi\|_{W^{2,\8}})  + C( c_1, c_2, \|\varphi\|_{W^{2,\8}})  \int_{((h_{xx})_\|)^+=0} e^{ - \frac{(h_{xx})_\|}{h_x} (1 + 2\eps \frac{|\varphi_x|}{h_x} ) } \ud x\\
\leq & C(c_1, c_2, \|\varphi\|_{W^{2,\8}})  + C( c_1, c_2, \|\varphi\|_{W^{2,\8}})  \int_{((h_{xx})_\|)^+=0} e^{ - \frac{(h_{xx})_\|}{h_x} } e^{ -2\eps \frac{(h_{xx})_\| |\varphi_x|}{h_x^2}  } \ud x\\
\leq & C(c_1, c_2, \|\varphi\|_{W^{2,\8}})  + C(c, c_1, c_2, \|\varphi\|_{W^{2,\8}}) \int_{\I} e^{ - \frac{(h_{xx})_\|}{h_x} } \ud x,
\end{align*}
where we used $|(h_{xx})_\| \varphi_x|\leq c$ in the last inequality and $C(c_1, c_2, \|\varphi\|_{W^{2,\8}})$ is a general constant depending only on $c_1, c_2, \|\varphi\|_{W^{2,\8}}$. This, together with \eqref{es1n} leads to \eqref{es2n}.

Step 2. Testing \eqref{vi35} with $v=h\pm \eps \varphi.$

First we show $v\in D(\phi+\psi)$. Since $\varphi \in C_b^\8(\I)$ and \eqref{lowup}, we can choose $\eps$ small enough such that $h_x+ \eps \varphi_x >0$, so $(h_{xx}+\eps \varphi_{xx})^- \in L^1$ and $v\in D(\phi)$.
 It is sufficient to show $v\in D(\psi)$ for $\eps$ small enough. Indeed, from \eqref{tm2.26} we know $\|h_{xx}\|_{\mathcal{M}}\leq 2c_1\phi(h^0)=C_*-1.$ Hence we choose $\eps$ small enough such that $\eps\leq \frac{1}{2\|\varphi\|_{W^{2,\8}}}$, which implies $\|v\|_{\mathcal{M}}\leq 2c_1\phi(u^0)+\frac{1}{2}< C_*$ and $\psi(v)=0.$

Plugging $v=h+ \eps \varphi$ into \eqref{vi35} gives
\begin{align}\label{tempeps}
0\leq & \la h_t(t),\eps\varphi\ra+\phi(h(t)+\eps \varphi)-\phi(h(t))\nonumber \\
=& \eps\la h_t(t),\varphi\ra + \int_{\I} e^{ -\Big[ \frac{h_{xx}+\eps \varphi_{xx} }{ h_x + \eps \varphi_x } \Big]_\| } - e^{ - \frac{(h_{xx})_\|}{h_x}  } \ud x.
\end{align}
We divide this by $\eps>0$ and take limit $\eps\to 0^+$. Thanks to the dominated convergence theorem and the integrability \eqref{es2n}, we  just need to check the pointwise limit for the integrand in the dense set $D$. For any $x\in \I$, $\varphi\in D$. we have
\begin{align*}
  &\frac{1}{\eps} \left[ e^{ -\Big[ \frac{h_{xx}+\eps \varphi_{xx} }{ h_x + \eps \varphi_x } \Big]_\| } - e^{ - \frac{(h_{xx})_\|}{h_x}  } \right]\\
=& \frac{1}{\eps} \left[  e^{ -\frac{ (h_{xx})_\| + \eps \varphi_{xx} }{h_x (1+ \eps \frac{\varphi_x}{h_x}) }  } - e^{ - \frac{(h_{xx})_\|}{h_x}  } \right]\\
= &\frac{1}{\eps}  \left[  e^{ -\frac{ (h_{xx})_\| + \eps \varphi_{xx} }{h_x}  [1- \eps \frac{\varphi_x}{h_x} + O(\eps^2)]}   - e^{ - \frac{(h_{xx})_\|}{h_x}  } \right]\\
= & \frac{1}{\eps} \left[  e^{  -\frac{(h_{xx})_\|}{h_x}- \eps \frac{\varphi_{xx}}{h_x} + \eps \frac{(h_{xx})_\|}{h_x}\frac{\varphi_x}{h_x} + O(\eps^2) } - e^{ - \frac{(h_{xx})_\|}{h_x}  } \right]\\
= & \frac{1}{\eps}  e^{  -\frac{(h_{xx})_\|}{h_x}- \eps \frac{\varphi_{xx}}{h_x} + \eps \frac{(h_{xx})_\|}{h_x}\frac{\varphi_x}{h_x} + O(\eps^2) } \big[ 1- e^{  \eps \frac{\varphi_{xx}}{h_x} - \eps \frac{(h_{xx})_\|}{h_x}\frac{\varphi_x}{h_x} + O(\eps^2) } \big] \\
\leq &   \frac{1}{\eps}  e^{  -\frac{(h_{xx})_\|}{h_x}- \eps \frac{\varphi_{xx}}{h_x} + \eps \frac{(h_{xx})_\|}{h_x}\frac{\varphi_x}{h_x} + O(\eps^2) } \big[  - \eps \frac{\varphi_{xx}}{h_x} + \eps \frac{(h_{xx})_\|}{h_x}\frac{\varphi_x}{h_x} + O(\eps^2) \big]\\
\to & e^{  -\frac{(h_{xx})_\|}{h_x}} \Big[\  -  \frac{\varphi_{xx}}{h_x} +  \frac{(h_{xx})_\|}{h_x}\frac{\varphi_x}{h_x}  \Big]
\end{align*}
as $\eps\to 0^+ $, where we used $1-e^{x}\leq -x$ for all $x\in \mathbb{R}$ in the inequality.
Then taking limit in \eqref{tempeps} yields
\begin{align*}
&  \la h_t(t),\varphi\ra + \lim_{\eps \to 0} \frac{1}{\eps} \int_\I \left[ e^{ -\Big[ \frac{h_{xx}+\eps \varphi_{xx} }{ h_x + \eps \varphi_x } \Big]_\| } - e^{ - \frac{(h_{xx})_\|}{h_x}  } \right] \ud x\\
=&\la h_t(t),\varphi\ra + \int_\I  e^{  -\frac{(h_{xx})_\|}{h_x}} \Big[\  -  \frac{\varphi_{xx}}{h_x} +  \frac{(h_{xx})_\|}{h_x}\frac{\varphi_x}{h_x}  \Big] \ud x \geq 0
\end{align*}
for any $\varphi\in D$. Repeating the above arguments with $v=h(t)-\eps \varphi$ gives
$$\la h_t(t),\varphi\ra + \int_\I  e^{  -\frac{(h_{xx})_\|}{h_x}} \Big[\  -  \frac{\varphi_{xx}}{h_x} +  \frac{(h_{xx})_\|}{h_x}\frac{\varphi_x}{h_x}  \Big] \ud x \leq 0.$$
Then we finally obtain
\begin{equation}
   \int_\I h_t(t) \varphi+  e^{  -\frac{(h_{xx})_\|}{h_x}} \Big[\  -  \frac{\varphi_{xx}}{h_x} +  \frac{(h_{xx})_\|}{h_x}\frac{\varphi_x}{h_x}  \Big] \ud x = 0.
\end{equation}
By the dense argument for G\^{a}teaux-derivative, this equality  holds for any $\varphi\in C_b^\8(\I)$.

{Now we integrate by parts  for $\int_\I -\frac{1}{h_x}e^{  -\frac{(h_{xx})_\|}{h_x}} \varphi_{xx} \ud x $ and 
by the Radon-Nikodym theorem,
\begin{align*}
 \int_\I h_t(t) \varphi+  \left[  \frac{1}{h_x}\left[ \big( e^{-\frac{(h_{xx})_\|}{h_x}} \big)_x \right]_\| + \frac{1}{h_x}\left[ \big( e^{-\frac{(h_{xx})_\|}{h_x}} \big)_x \right]_{\bot} -\frac{(h_{xx})_{\bot} }{h_x^2} e^{-\frac{(h_{xx})_\|}{h_x}} \right] \varphi_x \ud x =0.
\end{align*}
Therefore
$$ h_t -  \left[  \frac{1}{h_x}\left[ \big( e^{-\frac{(h_{xx})_\|}{h_x}} \big)_x \right]_\| + \frac{1}{h_x}\left[ \big( e^{-\frac{(h_{xx})_\|}{h_x}} \big)_x \right]_{\bot} -\frac{(h_{xx})_{\bot} }{h_x^2} e^{-\frac{(h_{xx})_\|}{h_x}} \right]_x =0$$
in $(C_b^\8(\I))'$, which leads to
$$h_t -   \left[\frac{1}{h_x} \big( e^{-\frac{(h_{xx})_\|}{h_x}} \big)_x \right]_x=0$$
for a.e. $(t, x)\in [0,T]\times \I$ with respect to Lebesgue measure and concludes $h$ is a strong solution.
}\end{proof}

\bigskip

%-----------------------------------------------------------------------------bibliography


\begin{thebibliography}{00}

\bibitem{She2011} H. Al Hajj Shehadeh, R. V. Kohn and J. Weare, The
  evolution of a crystal surface: Analysis of a one-dimensional step
  train connecting two facets in the adl regime, Physica D: Nonlinear
  Phenomena 240 (2011), no. 21, 1771--1784.

\bibitem{AGS}
L. Ambrosio, N. Gigli and G. Savar\'e, {\em Gradient Flows
in Metric Spaces and in the Space
of Probability Measures}, Birkh\"auser Verlag, Basel, 2008

%\bibitem{Barbu2010}
%V. Barbu, Nonlinear differential equations of monotone types in banach spaces, Springer, New York, 2010.

%\bibitem{BrezisF}
%H. Brezis, {\em Functional Analysis, Sobolev Spaces and Partial Differential Equations}, Springer New York, 2010.

%\bibitem{Brezis1973}
%H. Brezis,
%Op{\'e}rateurs maximaux monotones et semi-groupes de contractions dans les espaces de Hilbert,
%North-Holland, Elsevier, 1973.

\bibitem{BCF} W. K. Burton, N. Cabrera and F. C. Frank, The growth of
  crystals and the equilibrium structure of their surfaces,
  Philosophical Transactions of the Royal Society of London A:
  Mathematical, Physical and Engineering Sciences 243 (1951), no. 866,
  299--358.


%\bibitem{DeTe1984}
%F. Demengel and R. Temam, Convex functions of a measure and applications.
%Indiana Univ. Math. J. 33 (1984), 673-709.

\bibitem{Yip2001} W. E and N. K. Yip, Continuum theory of epitaxial
  crystal growth. I, Journal Statistical Physics 104 (2001), no. 1-2,
  221--253.

  \bibitem{evans1992}
{\sc L.C. Evans, R.F. Gariepy,} { Measure Theory and Fine Properties of Functions}, CRC Press, 1992.




%\bibitem{Brezis1972} H. Br\'ezis, Probl\`emes unilat\'eraux,
 % J. Math. Pures Appl. 51 (1972) 1--168.
%\bibitem{Friedman1990} F. Bernis and A. Friedman, Higher order nonlinear degenerate parabolic equations, Journal of Differential Equations 83 (1990), no. 1, 179-206.



%\bibitem{Butzer1971} P. L. Butzer and R. J. Nessel, Fourier analysis and approximation, vol. 40, Academic Press, 2011.


%\bibitem{Leoni2014} G. Dal Maso, I. Fonseca and G. Leoni, Analytical validation of a continuum model for epitaxial growth with elasticity on vicinal surfaces, Archive for Rational Mechanics and Analysis 212 (2014), no. 3, 1037--1064.

%\bibitem{Duport1995a} C. Duport, P. Nozieres, and J. Villain, New
%  instability in molecular beam epitaxy, Phys.  Rev. Lett., 74 (1995),
%  pp. 134--137.

%\bibitem{Duport1995b} C. Duport, P. Politi, and J. Villain, Growth instabilities induced by elasticity in a vicinal
%surface, J. Phys. I France, 5 (1995), pp. 1317--1350.




%\bibitem{Evans1998} L. C. Evans, Partial Differential Equations (Graduate Studies in Mathematics vol 19), Providence, RI: American Mathematical Society, 1998.

\bibitem{Leoni2015} I. Fonseca, G. Leoni and X. Y. Lu, Regularity in time for weak solutions of a continuum model for epitaxial growth with elasticity on vicinal surfaces, Communications in Partial Differential Equations 40 (2015), no. 10, 1942--1957.

%\bibitem{FunakiSpohn:97}
%T. Funaki and H. Spohn, Motion by Mean Curvature from the Ginzburg-Landau Interface Model, Comm. Math. Phys. 185 (1997) 1--36.

%\bibitem{HMNS} G.P. Galdi, An Introduction to the Mathematical Theory of the Navier-Stokes Equations: Steady-State Problems, Springer Science \& Business Media, 2011.

\bibitem{our} Y. Gao, J.-G. Liu and J. Lu, Continuum limit of a mesoscopic model with elasticity of step motion on vicinal surfaces, Journal of Nonlinear Science 27 (2017), no. 3, 873-926.


%\bibitem{our1} Y. Gao, J.-G. Liu and J. Lu, Continuum limit of step evolution on vicinal surface in the ADL regime, submitted.

\bibitem{G2} Y. Gao, J.-G. Liu, and X. Y. Lu, Gradient flow approach to an exponential thin film equation: global existence and latent singularity, ESIAM Control Optim. Calc. Var., to appear.

  \bibitem{our2} Y. Gao, J.-G. Liu and J. Lu, Weak solution of a continuum model for vicinal surface in the attachment-detachment-limited regime, SIAM Journal on Mathematical Analysis 49 (2017), no. 3, 1705-1731.

  \bibitem{ourxu} Y. Gao, J.-G. Liu , X. Y. Lu and X. Xu, Maximal monotone operator theory and
	% in non-reflexive Banach space and the
	its applications to thin film equation in epitaxial growth on vicinal surface, Calculus of Variations and Partial Differential Equations 57 (2018), no. 2, 55.
	
\bibitem{Gao-Ji}
Y. Gao, H. Ji, J.-G. Liu and T. P. Witelski, A vicinal surface model for epitaxial growth with logarithmic free energy, Discrete \& Continuous Dynamical Systems-B (2018), 1771-1784.





%\bibitem{Grin1986} M. Grinfeld, Instability of the separation boundary between a nonhydrostatically stressed elastic body and a melt, Soviet Physics Doklady, 1986, 831--834.

\bibitem{Giga-Giga2010}
M.H. Giga and Y.  Giga,
Very singular diffusion equations: second and fourth order problems
Japan J. Indust. Appl. Math., 27 (2010), pp. 323-345.

\bibitem{giga2010} Y. Giga and R. V. Kohn, Scale-invariant extinction time estimates for some singular diffusion equations, Discrete and Continuous Dynamical Systems - A, 30(2011), no. 2, 509-535.

    \bibitem{Giga-Kuroda} Y. Giga, H. Kuroda and H. Matsuoka, Fourth-order total variation flow with dirichlet condition: Characterization of evolution and extinction time estimates,  (2015).

    \bibitem{Giga-Monika} Y. Giga, M. Muszkieta and P. Rybka, A duality based approach to the minimizing total variation flow in the space $H^{-s}$, arXiv preprint arXiv:1706.03223 (2017).

\bibitem{M-n} R. Granero-Belinch\'on and M. Magliocca, Global existence and decay to equilibrium for some crystal surface models. Discrete \& Continuous Dynamical Systems-A, 39 (2019), no. 4, 2101-2131.

%\bibitem{GoSe1964}
%C. Goffman and J. Serrin, Sublinear functions of measures and variational
%integrals. Duke Math. J. 31 (1964), 159-178.

%\bibitem{Guo:88} M. Z. Guo, G. C. Papanicolaou, and S. R. S. Varadhan, Nonlinear diffusion limit for a system with nearest neighbor interactions, Communications in Mathematical Physics, 118 (1988) 31--59.

%\bibitem{Isr2000} N. Israeli and D. Kandel, Decay of one-dimensional surface modulations, Physical review B 62 (2000), no. 20, 13707.

\bibitem{SSR2} H.-C. Jeong and E. D. Williams, Steps on surfaces: Experiment and theory, Surface Science Reports 34 (1999), no. 6, 171-294.

\bibitem{Kohnbook} R. V. Kohn, ``Surface relaxation below the roughening temperature: Some recent progress and open questions," Nonlinear partial differential equations: The abel symposium 2010, H. Holden and H. K. Karlsen (Editors), Springer Berlin Heidelberg, Berlin, Heidelberg, 2012, pp. 207-221.

%\bibitem{D22}
%R. V. Kohn, E. Versieux, Numerical analysis of a steepest-descent PDE model for surface relaxation
%below the roughening temperature, SIAM J. Num. Anal. 48 (2010) 1781-1800.

\bibitem{cooper1996} B. Krishnamachari, J. McLean, B. Cooper, J. Sethna, Gibbs-Thomson formula for small island sizes: corrections for high vapor densities,
Phys. Rev. B 54 (1996), 8899-8907.

\bibitem{D24}
J. Krug, H. T. Dobbs, S. Majaniemi, Adatom mobility for the solid-on-solid model, Z. Phys. B 97
(1995) 281-291.

%\bibitem{Tersoff1998} F. Liu, J. Tersoff and M. Lagally,
%  Self-organization of steps in growth of strained films on vicinal
%  substrates, Physical Review Letters 80 (1998), no. 6, 1268.

\bibitem{LLDM}
J.-G. Liu, J. Lu, D. Margetis  and J. L. Marzuola, Asymmetry in crystal facet dynamics of homoepitaxy by a continuum model, Physica D: Nonlinear Phenomena, to appear.

\bibitem{Liu-Bob} J.-G. Liu and R.M. Strain,
Global stability for solutions to the exponential PDE describing epitaxial growth,
Interfaces and Free Boundaries, to appear.
%\bibitem{jinhuan} J.-G. Liu and J. Wang, Global existence for a thin film equation with subcritical mass, submitted.

\bibitem{LX}
J.-G. Liu and X. Xu,
{\em Existence theorems for a multi-dimensional crystal surface model},
SIAM J. Math. Anal., 48 (2016) 3667--3687

%\bibitem{LuLiuMargetis:15} J. Lu, J.-G. Liu and D. Margetis, Emergence of step flow from an atomistic scheme of epitaxial growth in 1+1 dimensions, Physical Review E 91 (2015), no. 3, 032403.

%\bibitem{Luo2016} T. Luo, Y. Xiang and N. K. Yip, Energy scaling and
%  asymptotic properties of step bunching in epitaxial growth with
 % elasticity effects, Multiscale Modeling \& Simulation 14 (2016),
 % no. 2, 737--771.

%\bibitem{Luoinprep} T. Luo, Y. Xiang and N. K. Yip, private
 % communication.


%\bibitem{Majda2002} A. J. Majda and A. L. Bertozzi, Vorticity and
%  incompressible flow, vol. 27, Cambridge University Press, 2002.

\bibitem{Margetis2006} D. Margetis and R. V. Kohn, Continuum
  relaxation of interacting steps on crystal surfaces in $2+1$
  dimensions, Multiscale Modeling \& Simulation 5 (2006), no. 3,
  729--758.

%\bibitem{Margetis2011} D. Margetis, K. Nakamura, From crystal steps to continuum laws: Behavior near large facets in one dimension, Physica D, 240 (2011), 1100--1110.

\bibitem{MarzuolaWeare2013} J. L. Marzuola and J. Weare, Relaxation of
  a family of broken-bond crystal-surface models, Phys. Rev. E, 88
  (2013), 032403.

%\bibitem{Nishikawa:02} T. Nishikawa, Hydrodynamic limit for the
%  Ginzburg-Landau $\nabla \phi$ interface model with a conservation
%  law, J. Math. Sci. Univ. Tokyo 9 (2002), 481--519.

%\bibitem{SSR1}
%R. Najafabadi and D. J. Srolovitz, Elastic step interactions on vicinal surfaces of fcc metals, Surface Science 317 (1994), no. 1, 221-234.

\bibitem{widom1982}
J. S. Rowlinson, B. Widom, Molecular Theory of Capillarity, Clarendon Press, Oxford, 1982.

%\bibitem{Phelps2009} R. R. Phelps, Convex functions, monotone
 % operators and differentiability, vol. 1364, Springer, 2009.
 \bibitem{Zang1990} M. Ozdemir and A. Zangwill, Morphological equilibration
 of a corrugated crystalline surface, Physical Review B 42 (1990), no. 8, 5013-5024.

\bibitem{PimpinelliVillain:98} A. Pimpinelli and J. Villain, Physics
  of Crystal Growth, Cambridge University Press, New York, 1998.

%\bibitem{Rock1969} R. T. Rockafellar, Convex functions, monotone
 % operators and variational inequalities, Theory and Applications of
  %Monotone Operators, A. Ghizzetti (1969), 35--65.

\bibitem{OTAM} F. Santambrogio,
Optimal Transport for Applied Mathematicians, Springer, New York, 2015.


%\bibitem{Shenoy2002} V. Shenoy and L. Freund, A continuum description
 % of the energetics and evolution of stepped surfaces in strained
 % nanostructures, Journal of the Mechanics and Physics of Solids 50
 % (2002), no. 9, 1817--1841.

%\bibitem{Sidi1988} A. Sidi and M. Israeli, Quadrature methods for
 % periodic singular and weakly singular fredholm integral equations,
 % Journal of Scientific Computing 3 (1988), no. 2, 201--231.

%\bibitem{Sro1989} D. J. Srolovitz, On the stability of surfaces of
 % stressed solids, Acta Metallurgica 37 (1989), no. 2, 621--625.

\bibitem{Tang1997} L.-H. Tang, Flattening of grooves: From step
  dynamics to continuum theory, Dynamics of crystal surfaces and
  interfaces, Springer, New York, 1997.

%\bibitem{Tersoff1995} J. Tersoff, Y. Phang, Z. Zhang and M. Lagally,
%  Step-bunching instability of vicinal surfaces under stress, Physical
%  Review Letters 75 (1995), no. 14, 2730.

%\bibitem{villani} B. C. Villani, Topics in optimal transportation., volume 58 of graduate studies in mathematics,  (2010).

%\bibitem{WeeksGilmer:79} J. D. Weeks and G. H. Gilmer, Dynamics of
%  Crystal Growth, in Advances in Chemical Physics, Vol. 40, edited by
%  I. Prigogine and S. A. Rice (John Wiley, New York, 1979),
%  pp. 157--228.

\bibitem{Xiang2002} Y. Xiang, Derivation of a continuum model for
  epitaxial growth with elasticity on vicinal surface, SIAM Journal on
  Applied Mathematics 63 (2002), no. 1, 241--258.
  
  \bibitem{Xu-n} X. Xu, Existence theorems for a crystal surface model involving the p-Laplace operator. SIAM Journal on Mathematical Analysis 50 (2018), no. 4, 4261-4281.

%\bibitem{Xiang2004} Y. Xiang and W. E, Misfit elastic energy and a
%  continuum model for epitaxial growth with elasticity on vicinal
%  surfaces, Physical Review B 69 (2004), no. 3, 035409.


%\bibitem{Xiang2009} H. Xu and Y. Xiang, Derivation of a continuum
%  model for the long-range elastic interaction on stepped epitaxial
%  surfaces in $2+1$ dimensions, SIAM Journal on Applied Mathematics 69
%  (2009), no. 5, 1393--1414.


%\bibitem{Yau:91} H.-T. Yau, Relative entropy and hydrodynamics of
%  Ginzburg-Landau models, Lett. Math. Phys. 22 (1991), 63--80.



%\bibitem{Zangwill:88} A. Zangwill, Physics at Surfaces, Cambridge
%  University Press, New York, 1988.


\end{thebibliography}
\end{document}